\def\PP{{\mathbb P}}
\def\ZZ{{\mathbb Z}}
\def\0{{\mathbf 0}}
\def\1{{\mathbf 1}}
\def\Ucal{{\mathcal U}}
\def\Kbar{\bar{K}}
\def\Ksep{K^\mathrm{sep}}
\def\arith{\mathrm{arith}}
\def\geom{\mathrm{geom}}
\def\ldim{\underline{\mathrm{dim}}}
\def\udim{\overline{\mathrm{dim}}}
\def\Aut{\mathrm{Aut}}
\def\ch{\mathrm{char}}
\def\ker{\mathrm{ker}\,}
\def\ab{\mathrm{ab}}
\def\Gal{\mathrm{Gal}}
\def\com{\mathrm{com}}
\def\GL{\mathrm{GL}}
\theoremstyle{plain}
\newtheorem{thm}{Theorem}
\newtheorem{conj}{Conjecture}
\newtheorem{cor}[thm]{Corollary}
\newtheorem{prop}[thm]{Proposition}
\newtheorem{lem}[thm]{Lemma}
\theoremstyle{definition}
\title[]{The Minkowski dimension of the image \\ of an arboreal Galois representation}
\author{Chifan Leung}
\address{Chifan Leung \newline Beijing International Center for Mathematical Research \newline Peking University \newline Yiheyuan Road \newline Beijing China, 100871 \newline chifan1728@pku.edu.cn}
\author{Clayton Petsche}
\address{Clayton Petsche \newline Department of Mathematics \newline Oregon State University \newline Corvallis, Oregon, U.S.A. \newline petschec@oregonstate.edu}
\thanks{{\em Date:} July 30, 2026}
\begin{document}

\begin{abstract}
We consider the Minkowski dimension of the arboreal Galois group $G_{f,\alpha}$ associated to a rational map $f:\PP^1\to\PP^1$ and a base point $\alpha\in\PP^1(K)$.  This is a subgroup of the automorphism group of the infinite $d$-ary rooted tree whose vertices are indexed by the backward orbit $f^{-\infty}(\alpha)$.  We show that the Minkowski dimension exists for the profinite iterated monodromy groups $G_f^\arith$ and $G_f^\geom$, and that these two groups have the same dimension.  We prove a dichotomy theorem stating that $G_f^\arith$ and $G_f^\geom$ are either the full tree automorphism group or else have non-maximal dimension.  We identify several cases of interest in which dimension non-maximality $\udim(G_{f,\alpha})<1$ holds, including the cases of postcritical base point, the case of periodic base point, the case in which $f$ is a nontrivial iterate, and the postcritically finite case.  We identify several cases of interest in which dimension minimality $\dim(G_{f,\alpha})=0$ holds, including the power, Chebyshev, Latt\`es, and abelian cases.  We formulate a conjecture on dimension minimality for quadratic polynomials, which if true would imply the $d=2$ case of a conjecture of Andrews-Petsche on abelian arboreal Galois groups.
\end{abstract}

\maketitle

\tableofcontents

\newpage


\section{Introduction}\label{IntroSect}

Let $f:\PP^1\to\PP^1$ be a rational map of degree $d\geq2$ defined over a field $K$ of characteristic zero or characteristic $>d$, and let $\alpha\in\PP^1(K)$.  Under these conditions, we call $(f,\alpha)$ an {\em arboreal pair} of degree $d\geq2$ defined over $K$, and we call $\alpha$ the {\em base point}.  Let $\Ksep\subseteq\Kbar$ be separable and algebraic closures of $K$.  

Denote by $f^n=f\circ\dots\circ f$ the composition of $n$ iterations of $f$.  For each $n\geq0$, define the $n$-th inverse image set associated to the arboreal pair $(f,\alpha)$ by
\begin{equation*}
f^{-n}(\alpha) = \{\beta\in\PP^1(\Kbar)\mid f^n(\beta)=\alpha\}.
\end{equation*}
Let $f^{-\infty}(\alpha)=\cup_{n\geq0}f^{-n}(\alpha)$ denote the full backward $f$-orbit of $\alpha$.  

Each arboreal pair $(f,\alpha)$ determines an infinite $d$-ary rooted tree $T_{f,\alpha}$ whose vertex set is the disjoint union $\amalg_{n=0}^{\infty} L_n$, where for each $n\geq0$ we index the set $L_n=\{v_\beta\}$ by the distinct points in $\beta\in f^{-n}(\alpha)$.  The only edge relations in $T_{f,\alpha}$ occur between vertices $v_{\beta'}\in L_{n+1}$ and $v_{\beta}\in L_n$ for some $n\geq0$, and these two vertices share an edge if and only if $f(\beta')=\beta$.

A point $\alpha\in\PP^1(K)$ is {\em postcritical} for $f$ if $f^n(\gamma)=\alpha$ for some critical point $\gamma\in\PP^1(\Kbar)$ and some $n\geq1$.  If the base point $\alpha$ is {\em not} postcritical, then the preimage tree $T_{f,\alpha}$ is a {\em complete} infinite rooted $d$-ary tree, meaning that every vertex in each $L_n$ meets exactly $d$ distinct vertices in $L_{n+1}$.  When $\alpha$ is postcritical, we have $|f^{-n}(\alpha)|<d^n$ for all large enough $n$, and thus $T_{f,\alpha}$ is not complete as a $d$-ary rooted tree. Many authors in arboreal Galois theory exclude the case of postcritical base point, but it suits our purposes to allow for this possibility.

Because the characteristic of $K$ is either zero or $>d$, it follows that $K(f^{-\infty}(\alpha))/K$ is a separable (hence Galois) extension.  As $f$ and $\alpha$ are defined over $K$, each automorphism $\sigma\in\Gal(K(f^{-\infty}(\alpha))/K)$ fixes $\alpha$ and commutes with the rational map $f$.  It follows that $\Gal(K(f^{-\infty}(\alpha))/K)$ acts on $T_{f,\alpha}$ by rooted tree automorphisms.  The resulting injective homomorphism 
\[
\rho_{f,\alpha}:\Gal(K(f^{-\infty}(\alpha))/K)\to\Aut(T_{f,\alpha})
\]
is known as the arboreal Galois representation associated to $f$ and $\alpha$.  We denote by 
\[
G_{f,\alpha}=\rho_{f,\alpha}(\Gal(K(f^{-\infty}(\alpha))/K))  
\]
the image in $\Aut(T_{f,\alpha})$ of the representation.  Both $\rho_{f,\alpha}$ and $G_{f,\alpha}$ depend on the choice of base field $K$ over which $f$ and $\alpha$ are defined, but we suppress this dependence in the notation.

Using a natural metric to realize the profinite topology on $\Aut(T_{f,\alpha})$ (this metric is defined precisely in $\S$\ref{TreeSection}), one may measure the size or complexity of any closed subgroup $G$ of $\Aut(T_{f,\alpha})$ using the {\em lower and upper Minkowski dimensions}, which are defined by
\begin{equation}\label{MinkowskiDimsGeneral}
\ldim(G) =\liminf_{\epsilon\to0}\frac{\log{N_\epsilon(G)}}{\log(1/\epsilon)} \hskip1cm
\udim(G) =\limsup_{\epsilon\to0}\frac{\log{N_\epsilon(G)}}{\log(1/\epsilon)} 
\end{equation}
where $N_\epsilon(G)$ is the smallest number of $\epsilon$-balls in $\Aut(T_{f,\alpha})$ needed to cover $G$.  It always holds that 
\[
0\leq \ldim(G)\leq \udim(G)\leq1.
\]  
If equality $\ldim(G)= \udim(G)$ holds, we say that the Minkowski dimension $\dim(G)$ exists, and we define it to be this common value 
\begin{equation*}
\dim(G):=\ldim(G)=\udim(G)=\lim_{\epsilon\to0}\frac{\log{N_\epsilon(G)}}{\log(1/\epsilon)}.
\end{equation*}

The lower Minkowski dimension $\ldim(G)$ coincides with its Hausdorff dimension, which has been considered by several authors in the arithmetic dynamics literature, beginning with Boston-Jones \cite{MR2520459}, Pink \cite{PinkQRM,pink2}, and others, e.g. \cite{MR4430121,MR3736808,MR3220023}.  It is always true that $\dim_{\mathrm{Haus}}\leq\ldim$ in any bounded metric space (see \cite{MR3616046} $\S$ 1.1-1.2).  But in our situation of a closed subgroup $G$ of the automorphism group of a $d$-ary rooted tree, equality $\dim_{\mathrm{Haus}}(G)=\ldim(G)$ always holds (see \cite{MR1422889} Thm. 2.4).  Thus lower Minkowski dimension means the same thing as Hausdorff dimension throughout this paper.  

Examples of rooted $d$-ary tree automorphism groups $G$ with $\ldim(G)<\udim(G)$ can be constructed.  These examples are somewhat contrived, and we conjecture that this cannot occur naturally in the case of arboreal Galois groups $G_{f,\alpha}$.  

\begin{conj}\label{MinkowskiDimExistsStrngSense}
Let $f:\PP^1\to\PP^1$ be a rational map of degree $d\geq2$ defined over a field $K$ of characteristic zero or $>d$, and let $\alpha\in\PP^1(K)$.  Then the Minkowski dimension $\dim(G_{f,\alpha})$ exists.
\end{conj}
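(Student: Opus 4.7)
The plan is to recast the conjecture in terms of the level-by-level growth of the image of the arboreal representation, and then to propose two parallel routes to prove the resulting convergence statement. For each $n\geq 0$, let $G_n$ denote the image of $G_{f,\alpha}$ in $\Aut(T_n^\com)$, and set $a_n=\log|G_n|$ and $b_n=\log|\Aut(T_n^\com)|$. Using the iterated wreath-product decomposition $\Aut(T_n^\com)\cong \Aut(T_{n-1}^\com)\wr S_d$ one has $b_n=d\,b_{n-1}+\log(d!)$, so $b_n=\frac{d^n-1}{d-1}\log(d!)$. The dimension formula recorded in the introduction is precisely the statement that $\ldim(G_{f,\alpha})=\liminf_n a_n/b_n$ and $\udim(G_{f,\alpha})=\limsup_n a_n/b_n$, so the conjecture reduces to showing that the sequence $(a_n/b_n)_n$ actually converges.

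To expose the structure of this sequence, let $H_k=\ker(G_k\to G_{k-1})$, which embeds in $(S_d)^{d^{k-1}}$, and define $\mu_k=\log|H_k|/(d^{k-1}\log(d!))\in[0,1]$. A telescoping computation gives
\begin{equation*}
\frac{a_n}{b_n}=\frac{(d-1)\sum_{k=1}^{n}\mu_k\,d^{k-1}}{d^n-1},
\end{equation*}
which is a geometric-weighted Ces\`aro-type average of the $\mu_k$. The first natural route is to prove the stronger statement that $\lim_{k\to\infty}\mu_k$ exists; by the Stolz--Ces\`aro theorem this would force $a_n/b_n$ to converge to the same limit. Intuitively $\mu_k$ measures the proportion of the maximal possible ``new arithmetic information'' introduced at level $k$, and it is plausible on dynamical grounds that this proportion eventually stabilizes.

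To attempt such a stabilization result, I would exploit the self-similar structure of $T_{f,\alpha}$. Passing to a finite extension $K'/K$ over which every point of $f^{-1}(\alpha)$ becomes $K'$-rational does not affect the Minkowski dimension, and over $K'$ the group $G_{f,\alpha}$ restricts on each subtree rooted at $\beta\in f^{-1}(\alpha)$ to a quotient of the smaller arboreal group $G_{f,\beta}$, with only a bounded ``entanglement'' among the $d$ branches. This should yield a recursion $a_n^{(\alpha)}=\sum_{\beta\in f^{-1}(\alpha)} a_{n-1}^{(\beta)}+O(1)$ and, at the dimensional level, $\dim(G_{f,\alpha})=\tfrac{1}{d}\sum_{\beta\in f^{-1}(\alpha)}\dim(G_{f,\beta})$. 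One could then hope to bootstrap this along the backward orbit, reducing existence of the limit to an invariant-measure or boundary-value statement on $T_{f,\alpha}$.

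The hard part, on either route, is that neither the sequence $(\mu_k)$ nor the self-similar recursion is under tight arithmetic control in general. The size of $H_k$ is governed by the Galois theory of the fiber polynomials $f^n(x)-\alpha$, and its asymptotic behavior lies at the heart of open Odoni--Jones-type conjectures on arboreal representations. In the \emph{generic} case one expects $\mu_k\to 1$, while in the abelian cases studied in this paper one expects $\mu_k\to 0$; so partial progress should be possible by combining the self-similar recursion with case-by-case eventual-stability results for the tower $K(f^{-n}(\alpha))$. A proof of the full conjecture appears to require a new asymptotic-regularity statement for arboreal representations that is beyond current techniques, and the main value of this plan is to reduce the abstract existence of $\dim(G_{f,\alpha})$ to such a regularity statement.
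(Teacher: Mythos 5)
The statement you were given is Conjecture \ref{MinkowskiDimExistsStrngSense}, which the paper poses as an open problem and does not prove; there is no proof in the paper to compare against, and your proposal --- which explicitly stops short of a proof --- is at least honestly calibrated on that point. The sound part of your reduction: with $a_n=\log|r_n(G_{f,\alpha})|$ and $b_n=\log|\Aut(T_n^{\com})|=\frac{d^n-1}{d-1}\log(d!)$, the identities $\ldim=\liminf a_n/b_n$ and $\udim=\limsup a_n/b_n$ are exactly the paper's Proposition \ref{MinDimAbstractCalcProp}, and your telescoping through the level kernels $H_k$ plus Stolz--Ces\`aro correctly shows that convergence of $\mu_k$ would imply the conjecture. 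But be aware that this is strictly stronger than the conjecture (the geometrically weighted averages can converge while $\mu_k$ oscillates), and neither the paper nor your plan offers any control on $\mu_k$; its behavior is precisely the open arithmetic question.

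The concrete gap is in your second route. The claimed recursion $a_n^{(\alpha)}=\sum_{\beta\in f^{-1}(\alpha)}a_{n-1}^{(\beta)}+O(1)$ (``bounded entanglement among the $d$ branches'') is unjustified and false in general: since $K(f^{-n}(\alpha))$ is the compositum of the fields $K(f^{-(n-1)}(\beta))$, one only gets the upper bound $a_n^{(\alpha)}\le\sum_{\beta}a_{n-1}^{(\beta)}$ --- this is the content of the paper's Lemma \ref{PullbackBasePointProp}, which correspondingly yields only the inequality $\udim(G_{f,\alpha})\le\frac{1}{d}\sum_{\beta}\udim(G_{f,\beta})$ --- and the defect can grow without bound because the branch fields may share arbitrarily large common subfields (the periodic and post-critical cases treated in Theorems \ref{PeriodicBasePointProp} and \ref{CriticalOrbitProp} are built on exactly such coincidences). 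So the exact self-similar identity $\dim(G_{f,\alpha})=\frac{1}{d}\sum_{\beta}\dim(G_{f,\beta})$ does not follow, and the bootstrap along the backward orbit collapses. In sum, both of your routes reduce the conjecture to regularity statements (stabilization of $\mu_k$; asymptotic independence of branches) that are themselves open and at least as hard as the conjecture itself; the proposal is a reasonable reformulation, not a proof or partial proof.
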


But in the absence of a general proof of the existence of the limit $\dim(G_{f,\alpha})$, it will be desirable for our purposes to distinguish between lower and upper Minkowski dimensions.  

One rationale for Conjecture \ref{MinkowskiDimExistsStrngSense} is that the Minkowski dimension does exist for the profinite iterated monodromy groups $G_f^\arith$ and $G_f^\geom$ associated to $f$, as we show in Theorem \ref{IMGDimExistsEqualThmIntro}.  Developed and utilized by Boston-Jones \cite{MR2520459}, Pink \cite{PinkQRM,pink2}, Jones \cite{MR3220023} and others, these groups are defined as follows.  Replacing $K$ with the field $K(t)$ of rational functions in a variable $t$ over $K$, instead of considering a base point $\alpha\in K$, one chooses as base point the variable $t$ itself.  Since $f$ is defined over $K$, its postcritical locus is confined to $\PP^1(\Kbar)$.  Therefore $t$ is not postcritical and $T_{f,t}$ is a complete infinite $d$-ary rooted tree.   We obtain the injective arboreal Galois representation
\[
\rho_{f}:\Gal(K(f^{-\infty}(t))/K(t))\to\Aut(T_{f,t})
\]
defined over the function field $K(t)$, and we may consider its image
\[
G_{f}^\arith=\rho_{f}(\Gal(K(f^{-\infty}(t))/K(t))),
\]
called the {\em arithmetic profinite iterated monodromy group} associated to $f$.  Replacing $K(t)$ with $\Ksep(t)$, one may also define the arboreal representation 
\[
\overline{\rho}_{f}:\Gal(\Ksep(f^{-\infty}(t))/\Ksep(t))\to\Aut(T_{f,t})
\]
and its image
\[
G_{f}^\geom=\overline{\rho}_{f}(\Gal(\Ksep(f^{-\infty}(t))/\Ksep(t))),
\]
called the {\em geometric profinite iterated monodromy group} associated to $f$.  Since $\overline{\rho}_f$ factors through $\rho_f$ we have an inclusion 
\begin{equation}\label{GeomArithInclusion}
G_{f}^\geom\subseteq G_{f}^\arith\subseteq \Aut(T_{f,t}).
\end{equation}

The most well-understood examples occur in the case of quadratic rational maps, due to work of Pink \cite{PinkQRM,pink2}.  It was shown by Pink in the $d=2$ case that $G_{f}^\geom=G_{f}^\arith=\Aut(T_{f,t})$ unless either (i) the rational map is postcritically finite; or (ii) a critical orbit collision of the form $f^{r+1}(\gamma_1)=f^{r+1}(\gamma_2)$ holds for some $r\geq1$, where $\gamma_1,\gamma_2\in\PP^1(\Ksep)$ are the two critical points of $f$.  In each case (i) and (ii), it holds that $\dim(G_{f}^\arith)<1$, with explicit calculation of the dimension in many cases of interest.

For each $\alpha\in\PP^1(K)$ which is not $f$-postcritical, an argument specializing $t=\alpha$ provides an embedding 
\begin{equation}\label{SpecializationIntro}
G_{f,\alpha}\subseteq G_{f}^\arith\subseteq\Aut(T_{f,t}).
\end{equation}
In some favorable situations over a number field $K$, it can be shown that 
\[
G_{f,\alpha}= G_{f}^\arith=\Aut(T_{f,t})
\]
or at least that $G_{f,\alpha}$ has finite index as a subgroup of $\Aut(T_{f,t})$ (see e.g. \cite{MR3937585,MR4057534,MR3937588}); in these cases it follows at once that $\dim(G_{f,\alpha})=1$.  There are also known examples of calculations in the intermediate range $0<\dim(G_{f,\alpha})<1$.  For example, Benedetto-Faber-Hutz-Juul-Yasufuku \cite{MR3736808} have calculated $\dim(G_{f,\alpha})=1-\frac{\log2}{3\log 6}=0.871...$ for the post critically finite cubic map $f(x)=-2x^3+3x^2$ and a certain infinite family of $\alpha\in\PP^1(K)$.  For the basilica map $f(x)=x^2-1$, it follows from the main result of Ahmad-Benedetto-Cain-Carroll-Fang \cite{MR4430121} that $\dim(G_{f,\alpha})=2/3$ for infinitely many $\alpha\in\PP^1(K)$.

Our first result shows that the Minkowski dimensions of $G_f^\geom$ and $G_f^\arith$ exist, and in fact these two dimensions always coincide.

\begin{thm}\label{IMGDimExistsEqualThmIntro}
Let $f:\PP^1\to\PP^1$ be a rational map of degree $d\geq2$ defined over a field $K$ of characteristic zero or $>d$.  The Minkowski dimensions of $G_f^\geom$ and $G_f^\arith$ both exist and $\dim(G_f^\geom)=\dim(G_f^\arith)$.
\end{thm}

The following is a dichotomy theorem, which states that $G_f^\geom$ and $G_f^\arith$ must either both coincide with the full tree automorphism group, or else both must have non-maximal Minkowski dimension.

\begin{thm}\label{IMGDichotomyThmIntro}
Let $f:\PP^1\to\PP^1$ be a rational map of degree $d\geq2$ defined over a field $K$ of characteristic zero or $>d$.  Then one and only one of the following two conditions holds: either
\begin{itemize}
\item[{\bf (i)}]   $G_{f}^\geom=G_{f}^\arith=\Aut(T_{f,t})$; or 
\item[{\bf (ii)}]   $\dim(G_{f}^\geom)=\dim(G_{f}^\arith)<1$. 
\end{itemize}
\end{thm}

The common value of $\dim(G_{f}^\geom)$ and $\dim(G_{f}^\arith)$ serves as a uniform upper bound for upper Minkowski dimension in the sense that
\begin{equation}\label{MDIMGUpperBound}
\udim(G_{f,\alpha})\leq \dim(G_{f}^\arith) \hskip1cm \text{for all }\alpha\in\PP^1(K).
\end{equation}
See $\S$\ref{IMGSect} for the proof of (\ref{MDIMGUpperBound}).

An application of (\ref{MDIMGUpperBound}) is the following.  Recall that $f:\PP^1\to\PP^1$ is said to be {\em postcritically finite} if every critical point $\gamma\in\PP^1(\Kbar)$ of $f$ is $f$-preperiodic.  It is known in the postcritically finite case that $G_f^\geom$ is topologically finitely generated (see \cite{MR3220023} $\S$3.1), which makes case {\bf (i)} in Theorem \ref{IMGDichotomyThmIntro} impossible.  This proves the following corollary.

\begin{cor}\label{PCFCorIntro}
Suppose that $f:\PP^1\to\PP^1$ is postcritically finite.  Then 
\[
\dim(G_{f}^\geom)=\dim(G_{f}^\arith)<1.  
\]
In particular, $\udim(G_{f,\alpha})\leq \dim(G_{f}^\arith)<1$ for all $\alpha\in\PP^1(K)$.
\end{cor}

Assuming that $f$ is postcritically finite and that $\alpha\in\PP^1(K)$ is not $f$-postcritical, it follows from (\ref{FiniteIndexImpliesDimMaxIntro}) and Corollary \ref{PCFCorIntro} that $G_{f,\alpha}$ has infinite index as a subgroup of $\Aut(T_{f,\alpha})$; this gives an alternate proof of a theorem of Jones (\cite{MR3220023} Thm. 3.1).

\bigskip

For general arboreal Galois groups $G_{f,\alpha}$, we are interested in distinguishing between cases of minimal dimension $\dim(G_{f,\alpha})=0$, cases of maximal dimension $\dim(G_{f,\alpha})=1$, and cases of intermediate dimension in which neither extreme holds.  

Beginning with the question of dimension maximality, a first observation is that, assuming that the base point $\alpha$ is not $f$-postcritical, we have $\dim(\Aut(T_{f,\alpha}))=1$ and therefore the implication
\begin{equation}\label{FiniteIndexImpliesDimMaxIntro}
[\Aut(T_{f,\alpha}):G_{f,\alpha}]<+\infty \hskip5mm \Rightarrow \hskip5mm \dim(G_{f,\alpha})=1
\end{equation}
holds (Proposition \ref{MinDimPropertiesProp}).  Therefore, in the case of non-postcritical base point $\alpha$, one may consider any dimension non-maximality result of the type $\udim(G_{f,\alpha})<1$ as a quantitative strengthening of the statement that $G_{f,\alpha}$ has infinite index as a subgroup of $\Aut(T_{f,\alpha})$.

We do not know whether the converse statement of (\ref{FiniteIndexImpliesDimMaxIntro}) is true; in other words, we do not know whether dimension maximality $\dim(G_{f,\alpha})=1$ may occur {\em only} in the case that $G_{f,\alpha}$ has finite index as a subgroup of $\Aut(T_{f,\alpha})$.  This equivalence does hold for $G_f^\geom$ and $G_f^\arith$, as follows from Theorem \ref{IMGDichotomyThmIntro}.  And in the $d=2$ case we can prove the following conditional result: if Jones' finite index conjecture (\cite{MR3220023} Conjecture 3.11) is true, then the converse of (\ref{FiniteIndexImpliesDimMaxIntro}) holds for quadratic rational maps over number fields.  This is proved in $\S$\ref{MaxConjSect}.

While a complete general characterization of dimension maximality $\dim(G_{f,\alpha})=1$ seems currently out of reach, we are able to prove dimension non-maximality $\udim(G_{f,\alpha})<1$ in some cases of interest.  We have already seen that the postcritically finite case is one such instance; the following proposition summarizes three more.

\begin{thm}\label{NonMaximalityThmIntro}
Let $f:\PP^1\to\PP^1$ be a rational map of degree $d\geq2$ defined over a field $K$ of characteristic zero or $>d$, and let $\alpha\in\PP^1(K)$.  If any one or more of the following conditions hold:
\begin{itemize}
\item[{\bf (a)}] $\alpha$ is $f$-postcritical;
\item[{\bf (b)}] $\alpha$ is $f$-periodic; or
\item[{\bf (c)}] $f=g^m$ for some rational map $g$ and some integer $m\geq2$;
\end{itemize}
then $\udim(G_{f,\alpha})<1$.
\end{thm}

Under each of the conditions {\bf (a)}, {\bf (b)}, and {\bf (c)}, explicit upper bounds on $\udim(G_{f,\alpha})$ are given in terms of $f$ and $\alpha$; these are stated precisely in Propositions \ref{PostCritBasePointProp}, \ref{PeriodicBasePointProp}, and \ref{IterateDimProp}.  

\bigskip

We turn now to the question of dimension minimality $\dim(G_{f,\alpha})=0$.  A first observation is that dimension minimality $\dim(G_{f,\alpha})=0$ must occur for all $\alpha\in\PP^1(K)$ whenever $f$ is a power map, a Chebyshev map, or a Latt\`es map.     We make these calculations in Propositions \ref{PowerMapsAreSmall}, \ref{ChebyshevMapsAreSmall}, and \ref{LattesMapsAreSmall} respectively.

A second observation is that dimension minimality $\dim(G_{f,\alpha})=0$ is forced whenever $G_{f,\alpha}$ is abelian.  This follows from the following purely group-theoretic result.

\begin{thm}\label{AbelianSubgroupsAreSmallIntro}
Let $T$ be an infinite $d$-ary rooted tree and let $G$ be an abelian subgroup of $\Aut(T)$.  Then $\dim(G)=0$.
\end{thm}

Theorem \ref{AbelianSubgroupsAreSmallIntro} was proved in the complete, $p$-ary case for prime $p$ by Ab\'ert-Vir\'ag \cite{MR2114819}.  Our treatment generalizes their result to the possibly incomplete, $d$-ary case for any $d\geq2$.  We prove Theorem \ref{AbelianSubgroupsAreSmallIntro} in $\S$\ref{MainAbelianSection}.  

It would be interesting to classify all cases of dimension minimality $\dim(G_{f,\alpha})=0$.  For arbitrary rational maps, even formulating a plausible conjecture seems difficult at present, but in the case of quadratic polynomials over a number field, we propose the following. 

\begin{conj}\label{MinkowskiDimSmallConj}
Let $f:\PP^1\to\PP^1$ be a quadratic polynomial map defined over a number field $K$, and let $\alpha\in\PP^1(K)$ be any non-exceptional point for $f$.  If $\dim(G_{f,\alpha})=0$, then $f$ is $\Kbar$-conjugate to either the squaring map $P(x)=x^2$ or the Chebyshev map $T_2(x)=x^2-2$.
\end{conj}

In favor of Conjecture \ref{MinkowskiDimSmallConj} is the fact that the statement is known to be true for $G_f^\geom$ and $G_f^\arith$, which follows from the work of Pink \cite{PinkQRM}, who has calculated $\dim(G_{f}^\geom)$ explicitly for all quadratic polynomials, and shown that the answer depends only the critical portrait of $f$.  It may be deduced from these calculations that $\dim(G_{f}^\arith)=\dim(G_{f}^\geom)>0$ for all quadratic polynomial maps $f$ except for those which are $\Kbar$-conjugate to either the squaring map $P(x)=x^2$ or the Chebyshev map $T_2(x)=x^2-2$.

As a result of Theorem \ref{AbelianSubgroupsAreSmallIntro}, we can show that Conjecture \ref{MinkowskiDimSmallConj} would have implications toward a classification of abelian arboreal Galois groups $G_{f,\alpha}$ for quadratic polynomials over number fields.  Let $K$ be a number field and let $K^\ab$ be its maximal abelian extension.  It has been conjectured by Andrews-Petsche \cite{MR4150256} (see also Ferraguti-Ostafe-Zannier \cite{MR4686319} for a corrected formulation of the conjecture) that, in the polynomial case, the group $G_{f,\alpha}$ can only be abelian in certain well understood situations.

\begin{conj}\label{AndrewsPetscheConj}(Andrews-Petsche \cite{MR4150256}, Ferraguti-Ostafe-Zannier \cite{MR4686319})
Let $K$ be a number field, let $f:\PP^1\to\PP^1$ be a polynomial map of degree $d\geq2$ defined over $K$, and let $\alpha\in \PP^1(K)$ be a non-exceptional point for $f$.  If $G_{f,\alpha}$ is abelian, then either:
\begin{itemize}
\item The pair $(f,\alpha)$ is $K^\ab$-conjugate to $(x^{d},\zeta)$ for a root of unity $\zeta$; or 
\item The pair $(f,\alpha)$ is $K^\ab$-conjugate to $(\pm T_d(x),\zeta+\frac{1}{\zeta})$ for a root of unity $\zeta$.
\end{itemize} 
\end{conj}

Conjecture \ref{AndrewsPetscheConj} has been proven in several special cases (e.g. \cite{MR4150256, MR4686319, MR4216695,MR4914085,MR3611309}), but it remains open in general for every degree $d$.  Using Theorem \ref{AbelianSubgroupsAreSmallIntro}, we can prove the following conditional result.

\begin{thm}\label{NewConjImpliesAP}
If Conjecture \ref{MinkowskiDimSmallConj} is true, then the $d=2$ case of Conjecture \ref{AndrewsPetscheConj} is true.
\end{thm}

Without a clearer general understanding of $G_{f}^\arith$ in the higher degree case, we hesitate to formally propose a generalization of Conjecture \ref{MinkowskiDimSmallConj} to a statement about polynomials of all degrees $d\geq2$.  But apart from the power maps and Chebyshev maps, we know of no additional examples of dimension minimality $\dim(G_{f,\alpha})=0$ among polynomial maps $f$ of any degree $d\geq2$.  For each degree $d\geq3$, if an analogue of Conjecture \ref{MinkowskiDimSmallConj} were known in which power maps and Chebyshev maps $f$ were the only examples of dimension minimality $\dim(G_{f,\alpha})=0$, then the degree $d$ case of Conjecture \ref{AndrewsPetscheConj} would follow, with nearly the same proof.

For rational maps, it is not clear how to formulate a plausible conjecture characterizing dimension minimality $\dim(G_{f,\alpha})=0$.  Even for quadratic rational maps, it would not be enough to include only the power, Chebyshev, and Latt\`es maps, because of the example $f(x)=2/(x-1)^2$ recently found by Ejder-Go\l aska-Kara-Nienhaus-\"Ulkem \cite{EGKNU}, which satisfies $\dim(G_{f}^\arith)=0$ and is neither a power, Chebyshev, nor Latt\`es map.  Interestingly, in this example $G_{f}^\geom$ has an abelian subgroup $U$ of finite index (\cite{EGKNU} Lem. 3.5 and Prop. 3.7), and therefore using Theorem \ref{IMGDimExistsEqualThmIntro}, Theorem \ref{AbelianSubgroupsAreSmallIntro}, and Proposition \ref{MinDimPropertiesProp} we have
\[
\dim(G_f^\arith)=\dim(G_f^\geom)=\dim(U)=0,
\]
giving an alternative explanation for this result found in \cite{EGKNU}.

{\bf Acknowledgements.}  The authors would like to thank Rob Benedetto, Rafe Jones, and Tom Tucker for helpful discussions.


\section{Automorphism groups of rooted trees}\label{TreeSection}

A {\em rooted tree} is a tree $T$ together with a choice of distinguished vertex called the {\em root} of $T$.  Let $w$ be a non-root vertex of $T$.  If $v$ is a vertex distinct from $w$, and $v$ lies in the unique path from $w$ to the root, we say that $w$ is a {\em descendant} of $v$, and that $v$ is an {\em ancestor} of $w$.  If $w$ is a descendant of $v$ and adjacent to $v$, we say that $w$ is the {\em child} of $v$ and $v$ is the {\em parent} of $w$.  Define the {\em level} of a vertex $v$ to be the number of edges in the unique path from $v$ to the root.  For each $k=0,1,2,\dots$, let $L_k$ denote the set of level $k$ vertices of $T$.

Let $d\geq2$ be an integer.  By an {\em infinite $d$-ary rooted tree} we mean a rooted tree $T$ such that each vertex has at least 1 and at most $d$ child vertices.  We say that an infinite $d$-ary rooted tree $T$ is {\em complete} if each vertex has exactly $d$ distinct child vertices.  Denote by $T^\com$ the complete infinite $d$-ary rooted tree.

By a {\em finite $d$-ary rooted tree of height $n$} we mean a rooted tree $T$ such that: (i) each vertex of level $k=0,1,2,\dots, n-1$ has at least 1 and at most $d$ child vertices, and (ii) each vertex of level $n$ has no child vertices.  We say that a finite $d$-ary rooted tree $T$ of height $n$ is {\em complete} if each vertex of level $k=0,1,2,\dots, n-1$ has exactly $d$ child vertices.  Denote by $T^\com_n$ the complete finite $d$-ary rooted tree of height $n$.

An {\em automorphism} $\sigma:T\to T$ of a $d$-ary rooted tree $T$ (finite or infinite) is a bijection on vertices of $T$ fixing the root vertex and preserving edge relations.  It follows that any such automorphism restricts to a bijection $L_k\to L_k$ of each level.  The group of all automorphisms of $T$ is denoted by $\Aut(T)$. 

If $T$ is a finite $d$-ary rooted tree of height $n$, a well-known recursive counting argument shows that
\begin{equation}\label{AutGroupCountBound}
|\Aut(T)|\leq d!^{(d^n-1)/(d-1)} \hskip1cm \text{(with equality if $T$ is complete).}
\end{equation}




Let $T$ be an infinite $d$-ary rooted tree.  For each $n\geq0$, let $T_n$ denote the finite subtree obtained by selecting only the vertices of $T$ of levels $k=0,1,2,\dots,n$, and the edges between them.  Thus $T_n$ is a finite $d$-ary rooted tree of height $n$.  Let 
\[
r_{n}:\Aut(T)\rightarrow\Aut(T_{n}) \hskip1cm r_{n}(\sigma) = \sigma|_{T_n}
\]
be the surjective group homomorphism obtained by restricting each automorphism $\sigma:T\rightarrow T$ to an automorphism $\sigma|_{T_{n}}:T_{n}\rightarrow T_{n}$.  

The profinite topology on the group $\Aut(T)$ can be realized by the metric defined for $\sigma,\tau\in\Aut(T)$ by
\begin{equation}
d(\sigma,\tau) = \inf\left\{\epsilon_n: n\geq0 \text{ and }r_n(\sigma)=r_n(\tau)\right\},
\end{equation}
where
\[
\epsilon_n=\frac{1}{|\Aut(T_n^\com)|}=\frac{1}{d!^{(d^n-1)/(d-1)}}.
\]
In other words, two automorphisms in $\Aut(T)$ are close to each other when they restrict to the same automorphism on the finite subtree $T_n$ for large $n$.  

In the complete case $T=T^\com$, the choice of distance $\epsilon_n=1/|\Aut(T_n^\com)|$ in the definition of the metric $d(\sigma,\tau)$ is traditional, as it ensures that the radius $\epsilon_n$ of the neighborhood $\ker(r_n)$ of the identity in $\Aut(T^\com)$ coincides with the reciprocal of its index as a subgroup of $\Aut(T^\com)$. This choice also forces $\dim(\Aut(T^\com))=1$; see Proposition \ref{FiniteIndexLargeImageProp}.

In the general case, we call special attention to the fact that our choice of distance in the definition of the metric $d(\sigma,\tau)$ is always the same quantity $\epsilon_n=1/|\Aut(T_n^\com)|$, even in the non-complete case $T\neq T^\com$.  This choice is convenient from the point of view of Minkowski dimension, as it allows for clean formulas such as Proposition \ref{MinDimAbstractCalcProp}, which holds regardless of whether or not $T$ is complete.  In the arboreal setting, this choice allows for the calculations of Proposition \ref{MainArbHausdorffDimCalc} to hold, treating the case of $f$-postcritical base point $\alpha$ on the same footing with the general case.  This choice of metric also has the consequence that $\udim(\Aut(T))<1$ whenever $T\neq T^\com$; see Proposition \ref{NonCompleteProp}. 

The following proposition calculates the lower and upper Minkowski dimensions of a subgroup $G$ of $\Aut(T)$ in terms of the images of $G$ under the surjective restriction homomorphisms $r_n:\Aut(T)\to\Aut(T_n)$.

\begin{prop}\label{MinDimAbstractCalcProp}
Let $T$ be an infinite $d$-ary rooted tree, and let $G$ be a closed subgroup of $\Aut(T)$.  Then
\begin{equation*}
\begin{split}
\ldim(G) & =\frac{d-1}{\log(d!)}\liminf_{n\rightarrow+\infty}\frac{1}{d^n}\log|r_n(G)| \\
\udim(G) & =\frac{d-1}{\log(d!)}\limsup_{n\rightarrow+\infty}\frac{1}{d^n}\log|r_n(G)| \\
\dim(G) & =\frac{d-1}{\log(d!)}\lim_{n\rightarrow+\infty}\frac{1}{d^n}\log|r_n(G)| \hskip1cm (\text{when this limit exists}).
\end{split}
\end{equation*}
Moreover, it holds that $0\leq \ldim(G)\leq \udim(G)\leq1$.
\end{prop}

\begin{proof}
For each possible distance $\epsilon_n=1/|\Aut(T_n^\com)|=d!^{-(d^{n}-1)/(d-1)}$ under the metric $d(\sigma,\tau)$, suppose that $N=|r_n(G)|$.  We may select a collection of $\sigma_i\in G$ with distinct $r_n(\sigma_i)$ and $r_n(G)=\{r_n(\sigma_i)\}_{i=1}^{N}$.  Then $\{B_{\epsilon_n}(\sigma_i)\}_{i=1}^N$ is a minimal covering of $G$ using balls of radius $\epsilon_n$, so by (\ref{MinkowskiDimsGeneral}) we have 
\begin{equation*}
\begin{split}
\udim(G) & =\limsup_{\epsilon\to0}\frac{\log N_\epsilon(G)}{\log(1/\epsilon)} \\
	& =\limsup_{n\to+\infty}\frac{\log N_{\epsilon_n}(G)}{\log(1/\epsilon_n)} \\
	& =\limsup_{n\to+\infty}\frac{\log|r_n(G)|}{\log(d!^{(d^{n}-1)/(d-1)})} \\
	& = \frac{d-1}{\log(d!)}\limsup_{n\rightarrow+\infty}\frac{1}{d^n}\log|r_n(G)|,
\end{split}
\end{equation*}
and similarly in the case of $\ldim(G)$.  

The bounds $0\leq \ldim(G)\leq \udim(G)$ are trivial.  To see that $\udim(G)\leq 1$, since $r_n(G)\subseteq\Aut(T_n)$, from (\ref{AutGroupCountBound}) we have
\begin{equation*}
|r_n(G)|\leq |\Aut(T_n)|\leq d!^{(d^n-1)/(d-1)}
\end{equation*}
and $\udim(G)\leq 1$ follows from this and the established identity for $\udim(G)$.
\end{proof}

The identities in Proposition \ref{MinDimAbstractCalcProp} are special cases of more general calculations made by Barnea-Shalev \cite{MR1422889}, following work of Abercrombie \cite{MR1281541}.  

\begin{prop}\label{MinDimPropertiesProp}
Let $T$ be an infinite $d$-ary rooted tree, and let $G\subseteq H$ be closed subgroups of $\Aut(T)$.  Then $\ldim(G)\leq \ldim(H)$ and $\udim(G)\leq \udim(H)$.  If $G$ has finite index in $H$, then $\ldim(G)=\ldim(H)$ and $\udim(G)=\udim(H)$.
\end{prop}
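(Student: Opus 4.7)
The plan is to deduce both assertions directly from the explicit formulas in Proposition \ref{MinDimAbstractCalcProp}, which express $\ldim$ and $\udim$ in terms of the sequences $\frac{1}{d^n}\log|r_n(G)|$. The work is then entirely at the finite level $n$, and the limit operations at the end are automatic.

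For the monotonicity statement, first I would observe that the inclusion $G\subseteq H$ yields $r_n(G)\subseteq r_n(H)$, and therefore $\log|r_n(G)|\leq\log|r_n(H)|$, for every $n\geq0$. Dividing through by $d^n$ and taking liminf (resp.\ limsup), then rescaling by the positive constant $(d-1)/\log(d!)$ as in (\ref{HausDimDef}), gives $\ldim(G)\leq\ldim(H)$ and $\udim(G)\leq\udim(H)$.

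For the finite-index statement, set $m=[H:G]<+\infty$. The key step is to establish the reverse inequality $|r_n(H)|\leq m\,|r_n(G)|$ for every $n\geq0$. To see this, I would let $N_n=H\cap\ker(r_n)$, which is a normal subgroup of $H$, identify $r_n(H)$ with $H/N_n$, and identify $r_n(G)$ with the subgroup $GN_n/N_n$ of $H/N_n$. The index of $r_n(G)$ in $r_n(H)$ then equals $[H:GN_n]$, which is at most $[H:G]=m$ because of the chain $G\subseteq GN_n\subseteq H$. Taking logs, dividing by $d^n$, and using that $(\log m)/d^n\to0$, I would conclude
\[
\liminf_{n\to+\infty}\frac{1}{d^n}\log|r_n(H)|\;\leq\;\liminf_{n\to+\infty}\frac{1}{d^n}\log|r_n(G)|,
\]
and likewise for $\limsup$; combined with the monotonicity already established, this forces the equalities $\ldim(G)=\ldim(H)$ and $\udim(G)=\udim(H)$.

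I do not anticipate any serious obstacle here. The only non-formal ingredient is the elementary group-theoretic bound $[r_n(H):r_n(G)]\leq[H:G]$, and the rest is routine bookkeeping to absorb the fixed constant $\log m$ into the growing denominator $d^n$. The hypothesis that $G$ and $H$ are closed plays no explicit role in the argument itself; it is only needed to ensure that $\ldim(G)$ and $\udim(G)$ are defined via Proposition \ref{MinDimAbstractCalcProp} in the first place.
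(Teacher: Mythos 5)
Your proposal is correct and follows essentially the same route as the paper: monotonicity from $r_n(G)\subseteq r_n(H)$, and the finite-index case from the bound $[r_n(H):r_n(G)]\leq[H:G]$ followed by absorbing the constant $\log m$ into the denominator $d^n$. The only cosmetic difference is that you justify the index bound via the second isomorphism theorem with $N_n=H\cap\ker(r_n)$, whereas the paper exhibits a surjection of coset spaces $H/G\to r_n(H)/r_n(G)$; these are the same elementary fact.
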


\begin{proof}
The assertions that $\ldim(G)\leq \ldim(H)$ and $\udim(G)\leq \udim(H)$ are evident from Proposition \ref{MinDimAbstractCalcProp}, since $r_n(G)\subseteq r_n(H)$ for all $n\geq0$.  Assume now that $G$ has finite index in $H$.  Elementary calculations show that the map
\[
\bar{r}_{n}:H/G\rightarrow r_n(H)/r_n(G) \hskip1cm \bar{r}_{n}(\sigma G)=r_{n}(\sigma)r_{n}(G)
\]
on coset spaces is well-defined and surjective, which implies that
\begin{equation}\label{CosetBound}
(H:G)\geq (r_n(H):r_n(G))
\end{equation}
for all $n\geq0$.  
Thus letting $M=(H:G)$, the bound (\ref{CosetBound}) implies that $|r_{n}(G)|\geq|r_n(H)|/M$ for all $n$ and so
\begin{equation*}
\begin{split}
\ldim(G) & = \frac{d-1}{\log(d!)}\liminf_{n\rightarrow+\infty}\frac{1}{d^n}\log|r_n(G)| \\
	& \geq \frac{d-1}{\log(d!)}\liminf_{n\rightarrow+\infty}\frac{1}{d^n}(\log|r_n(H)|-\log M) \\
	& = \frac{d-1}{\log(d!)}\liminf_{n\rightarrow+\infty}\frac{1}{d^n}\log|r_n(H)| =\ldim(H),
\end{split}
\end{equation*} 
which, combined with the opposite bound proved already, completes the proof of $\ldim(G)=\ldim(H)$.  The proof that $\udim(G)=\udim(H)$ is similar.
\end{proof}

\begin{prop}\label{FiniteIndexLargeImageProp}
If $G$ is a finite index subgroup of $\Aut(T^\com)$, then $\dim(G)=1$.  
\end{prop}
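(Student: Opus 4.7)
The plan is to reduce immediately to the case $G = \Aut(T^\com)$ itself using Proposition \ref{MinDimPropertiesProp}, and then to compute $\dim(\Aut(T^\com))$ directly from the formula of Proposition \ref{MinDimAbstractCalcProp}. Since $\Aut(T^\com) \subseteq \Aut(T^\com)$ is a finite-index containment, Proposition \ref{MinDimPropertiesProp} gives $\ldim(G) = \ldim(\Aut(T^\com))$ and $\udim(G) = \udim(\Aut(T^\com))$, so it suffices to establish $\dim(\Aut(T^\com)) = 1$.

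For this, I would observe that the restriction homomorphism $r_n : \Aut(T^\com) \to \Aut(T_n^\com)$ is surjective (this is a standard fact about the complete rooted tree: any automorphism of $T_n^\com$ extends by the identity on the levels beyond $n$, for instance). Therefore $r_n(\Aut(T^\com)) = \Aut(T_n^\com)$, and by Lemma \ref{AutGroupCountLemma} we know exactly that $|\Aut(T_n^\com)| = d!^{(d^n - 1)/(d-1)}$.

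Substituting into the formula of Proposition \ref{MinDimAbstractCalcProp}, the limit in question evaluates to
\[
\lim_{n \to +\infty} \frac{1}{d^n} \log |r_n(\Aut(T^\com))|
= \lim_{n \to +\infty} \frac{1}{d^n} \cdot \frac{d^n - 1}{d - 1}\log(d!)
= \frac{\log(d!)}{d-1},
\]
which exists, so $\dim(\Aut(T^\com))$ exists and equals $\frac{d-1}{\log(d!)} \cdot \frac{\log(d!)}{d-1} = 1$. Combining with the reduction above yields $\dim(G) = 1$.

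There is essentially no obstacle here: the proposition is a direct verification once Proposition \ref{MinDimPropertiesProp}, Proposition \ref{MinDimAbstractCalcProp}, and Lemma \ref{AutGroupCountLemma} are in place. The only item worth checking carefully is the surjectivity of the restriction $r_n$ on $\Aut(T^\com)$, but this is immediate from the completeness of $T^\com$ and the freedom to extend by identity on deeper levels. The computation is in fact the reason the specific normalization $\epsilon_n = 1/|\Aut(T_n^\com)|$ was chosen in the metric (\ref{AutMetricDef}), as the author notes after that definition.
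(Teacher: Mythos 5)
Your proof is correct and follows essentially the same route as the paper's: compute $|r_n(\Aut(T^\com))|=|\Aut(T_n^\com)|=d!^{(d^n-1)/(d-1)}$ via Lemma \ref{AutGroupCountLemma}, apply Proposition \ref{MinDimAbstractCalcProp} to get $\dim(\Aut(T^\com))=1$, and then invoke Proposition \ref{MinDimPropertiesProp} to transfer this to any finite-index subgroup. The only difference is the order of the two steps, which is immaterial.
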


\begin{proof}
We have $|r_n(\Aut(T^\com))|=|\Aut(T_n^\com)|=d!^{(d^n-1)/(d-1)}$ by (\ref{AutGroupCountBound}), and hence $\dim(\Aut(T^\com))=1$ using Proposition \ref{MinDimAbstractCalcProp}.  It then follows from Proposition \ref{MinDimPropertiesProp} that $\dim(G)=1$.
\end{proof}

\begin{prop}\label{NonCompleteProp}
Let $T$ be an infinite $d$-ary rooted tree.  If $T$ is not complete, then $\udim(\Aut(T))<1$.
\end{prop}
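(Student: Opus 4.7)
The plan is to apply Proposition \ref{MinDimAbstractCalcProp} with $G=\Aut(T)$, noting that $r_n(\Aut(T))=\Aut(T_n)$. The task thus reduces to establishing the strict inequality
\[
\limsup_{n\to+\infty}\frac{1}{d^n}\log|\Aut(T_n)| < \frac{\log(d!)}{d-1}.
\]
To do this, I would invoke the intermediate estimate appearing inside the proof of Lemma \ref{AutGroupCountLemma}, which gives
\[
\log|\Aut(T_n)| \leq \log(d!)\cdot\bigl(|L_0|+|L_1|+\cdots+|L_{n-1}|\bigr).
\]
So it suffices to show that $\limsup_{n\to+\infty} d^{-n}\sum_{k=0}^{n-1}|L_k| < 1/(d-1)$, i.e.\ that the level-sum of $T$ is asymptotically strictly less than the level-sum of $T^\com$.

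To achieve this, I would use the non-completeness hypothesis to produce a fixed vertex $v_0$ of $T$, at some level $k_0$, having exactly $d'<d$ children. The rooted subtree of $T$ based at $v_0$ is itself $d$-ary, and its root has only $d'$ children, so its level set at depth $j\geq 1$ (inside the subtree) contains at most $d'\cdot d^{j-1}$ vertices. Combining this with the trivial bound applied to vertices of $L_{k_0}$ other than $v_0$, whose descendants at level $k$ number at most $(|L_{k_0}|-1)\cdot d^{k-k_0} \leq (d^{k_0}-1)\cdot d^{k-k_0}$, yields
\[
|L_k| \leq d^k - (d-d')\,d^{k-k_0-1} \qquad (k>k_0).
\]
Summing over $k$ then produces
\[
\frac{1}{d^n}\sum_{k=0}^{n-1}|L_k| \leq \frac{1}{d-1} - \frac{d-d'}{(d-1)\,d^{k_0+1}} + o(1).
\]
Taking $\limsup$ gives an upper bound strictly less than $1/(d-1)$, with concrete slack $(d-d')/((d-1)\,d^{k_0+1})>0$ determined by the location and deficit at $v_0$.

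The argument is essentially bookkeeping layered on top of Lemma \ref{AutGroupCountLemma}, and I do not anticipate a serious obstacle. The only place requiring care is the deficit calculation at levels $k>k_0$: one must decompose $L_k$ into descendants of $v_0$ and the rest, so that a single missing child at $v_0$ propagates into a proportional deficit $d^{k-k_0-1}(d-d')$ at every later level. Once that decomposition is in place, the geometric series collapses exactly to the slack term displayed above, and the conclusion $\udim(\Aut(T))<1$ follows immediately from Proposition \ref{MinDimAbstractCalcProp}.
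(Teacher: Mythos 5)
Your argument is correct and is essentially the paper's own proof: both reduce to Proposition \ref{MinDimAbstractCalcProp} via $r_n(\Aut(T))=\Aut(T_n)$, both use the per-level bound $|\Aut(T_{n+1})|\leq d!^{|L_n|}|\Aut(T_n)|$ (your cited intermediate estimate from Lemma \ref{AutGroupCountLemma}), and both propagate a level-size deficit geometrically down the tree. The only cosmetic difference is that the paper measures the deficit by a whole level ($R=|L_m|<d^m$, yielding $\udim\leq R/d^m$) rather than by a single incomplete vertex, and your slack $1-(d-d')/d^{k_0+1}$ is exactly the paper's bound applied at level $m=k_0+1$.
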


\begin{proof}
The hypothesis $T\neq T^\com$ means there exists some $m\geq1$ for which $|L_m|<d^m$.  We will show that 
\begin{equation}\label{NonCompleteUDimBound}
\udim(\Aut(T))\leq \frac{|L_m|}{d^m}<1
\end{equation}
which will complete the proof.

Let $R=|L_m|$.  Since each vertex in $L_{n}$ shares an edge with at most $d$ vertices of $L_{n+1}$, we have $|L_{n+1}|\leq |L_n|d$ for all $n\geq0$.  In particular $|L_{m+1}|\leq Rd$, and repeating this argument we obtain $|L_{m+k}|\leq Rd^{k}$ for each $k\geq0$.  

For each $n\geq0$ let $r_{n+1,n}:\Aut(T_{n+1})\to\Aut(T_n)$ denote the surjective restriction map.  Each element of $\ker(r_{n+1,n})$ can only permute the $d$ vertices above any given vertex of $L_n$, so $|\ker(r_{n+1,n})|\leq d!^{|L_n|}$.  Therefore
\[
|\Aut(T_{n+1})|=|\ker(r_{n+1,n})||\Aut(T_{n})|\leq d!^{|L_n|}|\Aut(T_{n})|.
\]
For $k\geq0$ we therefore have 
\[
|\Aut(T_{m+k+1})|\leq d!^{|L_{m+k}|}|\Aut(T_{m+k})|\leq d!^{Rd^{k}}|\Aut(T_{m+k})|,
\]
which we can repeat recursively to arrive at
\[
|\Aut(T_{m+k})|\leq d!^{R(1+d+d^2+\dots+d^{k-1})}|\Aut(T_{m})|
\]
for each $k\geq1$.

Since $r_n(\Aut(T))=\Aut(T_n)$, we apply Proposition \ref{MinDimAbstractCalcProp} to obtain
\begin{equation*}
\begin{split}
\udim(\Aut(T)) & =\frac{d-1}{\log(d!)}\limsup_{k\rightarrow+\infty}\frac{1}{d^{m+k}}\log|r_{m+k}(\Aut(T))| \\
	& \leq\frac{d-1}{\log(d!)}\limsup_{k\rightarrow+\infty}\frac{1}{d^{m+k}}\log(d!^{R(1+d+d^2+\dots+d^{k-1})}|\Aut(T_{m})|) \\
	& =\frac{d-1}{\log(d!)}\limsup_{k\rightarrow+\infty}\frac{R(1+d+d^2+\dots+d^{k-1})}{d^{m+k}}\log(d!) =\frac{R}{d^m},
\end{split}
\end{equation*}
completing the proof.
\end{proof}


\section{Abelian subgroups of $\Aut(T)$}\label{MainAbelianSection}

The goal of this section is to prove Theorem \ref{AbelianSubgroupsAreSmallIntro} on the vanishing Minkowski dimension of abelian subgroups of $\Aut(T)$.

Let $T$ be a finite $d$-ary rooted tree of height $n$, and let $v$ be a level $k$ vertex for $0\leq k\leq n-1$.  We say that $v$ is a {\em complete vertex} of $T$ if $v$ has $d$ child vertices, and $v$ is an {\em incomplete vertex} of $T$ if $v$ has fewer than $d$ child vertices.  (It is most convenient to consider the level $n$ vertices to be neither complete nor incomplete.)  Let $I(T)$ be the number of incomplete vertices of $T$.

Given a finite $d$-ary rooted tree $T$ of height $n$, let $G$ be a subgroup of $\Aut(T)$.  Recall that $T/G$ is the tree whose vertices are the $G$-orbits of vertices of $T$.  As the action of $G$ fixes the root of $T$ and preserves the edge relations of $T$, the quotient $T/G$ is itself a $d$-ary rooted tree of height $n$, with levels and edge relations induced by those of $T$.

The following lemma was proved by Ab\'ert-Vir\'ag  (\cite{MR2114819} Lemma 8.1)  in the case of complete $p$-ary rooted trees for prime $p$.  Our statement generalizes this to the possibly incomplete case and to arbitrary degree $d\geq2$.

\begin{lem}\label{MainAbelianSubgroupBound}
Let $T$ be a finite $d$-ary rooted tree of height $n\geq0$, and let $G$ be an abelian subgroup of $\Aut(T)$.  Let $q$ be the order of the largest abelian subgroup of $S_d$.  Then $|G|\leq q^{I(T/G)}$.
\end{lem}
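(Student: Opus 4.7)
The plan is to filter $G$ by level stabilizers and bound each successive quotient using orbit data, so that only the incomplete vertices of $T/G$ contribute factors of $q$. For each $0\leq k\leq n$, let $G^{(k)}\leq G$ denote the pointwise stabilizer of $L_0\cup L_1\cup\cdots\cup L_k$, giving the chain
\[
G = G^{(0)} \supseteq G^{(1)} \supseteq \cdots \supseteq G^{(n)} = \{e\}
\]
and the factorization $|G|=\prod_{k=0}^{n-1}|G^{(k)}/G^{(k+1)}|$. For each internal vertex $v$ of $T/G$ at level $k$, pick any representative $u\in T$ and let $P_v$ denote the image of the stabilizer $G_u$ inside $\Sym(\text{children of }u)$. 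Because $G$ is abelian, $G_u$ depends only on the $G$-orbit of $u$, so $|P_v|$ is well-defined; and since $P_v$ is an abelian subgroup of a symmetric group on at most $d$ letters, monotonicity of maximum abelian subgroup orders in symmetric groups gives $|P_v|\leq q$. Moreover, when $v$ is complete in $T/G$ the $G_u$-orbits on children of $u$ are forced to be $d$ singletons, so $G_u$ acts trivially and $|P_v|=1$.

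The key technical step is the inequality
\[
|G^{(k)}/G^{(k+1)}| \leq \prod_{v\in L_k(T/G)}|P_v|.
\]
The natural embedding places $G^{(k)}/G^{(k+1)}$ inside $\prod_{u\in L_k}\Sym(\text{children of }u)$, and since $G^{(k)}\subseteq G_u$ each coordinate projection lands inside $P_v$. The abelian hypothesis is then used as follows: if $u$ and $u'=gu$ lie in the same $G$-orbit with $g\in G$, and $h\in G^{(k)}$, then for any child $c'=gc$ of $u'$ one has $hc' = hgc = ghc = g(hc)$ by commutativity. Thus the action of $h$ on children of $u'$ is merely the $g$-translate of its action on children of $u$, which forces the image of $G^{(k)}$ to be ``diagonal'' along each orbit and hence to factor through $\prod_{v\in L_k(T/G)}P_v$ with one factor per orbit, not per vertex.

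Multiplying the level-by-level bounds gives $|G|\leq \prod_{v\text{ internal in }T/G}|P_v|$, and since only the incomplete $v$ can contribute factors exceeding $1$, this yields $|G|\leq q^{I(T/G)}$. The diagonal reduction in the second paragraph is the only subtle step: it is precisely where commutativity is indispensable, and the bound fails in general for non-abelian $G$ (as one sees already with the full $\Aut(T_2^\com)$ in the $d=2$ case). Everything else is routine level-by-level bookkeeping together with the standard fact that any abelian subgroup of $S_c$ for $c\leq d$ has order at most $q$.
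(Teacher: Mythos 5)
Your proof is correct, and it takes a genuinely different route from the paper's. The paper argues by induction on the height: it splits $|G|=|G^*|\,|H|$ at the first level, embeds the kernel $H$ into a product $H_1\times\cdots\times H_s$ of faithful actions on the descendant subtrees of one orbit representative per level-$1$ orbit (commutativity is used to prove injectivity of that embedding), and then invokes the inductive hypothesis together with a graph isomorphism matching up incomplete vertices. You instead avoid induction entirely by filtering $G$ through the level stabilizers $G^{(k)}$ and bounding each quotient $G^{(k)}/G^{(k+1)}$ by one abelian permutation group per level-$k$ vertex of $T/G$. The essential use of commutativity is the same computation $h(gc)=g(hc)$ in both arguments, but you deploy it horizontally (to show the action of $G^{(k)}$ is diagonal along each orbit, and that stabilizers are constant along orbits) rather than vertically. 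Your version is arguably leaner --- it dispenses with the induction and the graph-isomorphism bookkeeping of (\ref{GraphIsomorphism}) --- and it yields the slightly sharper intermediate bound $|G|\leq\prod_{v}|P_v|$ over internal vertices $v$ of $T/G$. One step you assert rather than prove: that a complete vertex $v=Gu$ of $T/G$ forces $G_u$ to act trivially on the children of $u$. This needs the observation that the children of $v$ in $T/G$ are in bijection with the $G_u$-orbits of children of $u$, which in turn uses the fact that any $g\in G$ carrying one child of $u$ to another must fix the common parent $u$ and hence lie in $G_u$; so a complete $v$ gives $d$ singleton $G_u$-orbits among at most $d$ children. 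That is a one-line verification and does not constitute a gap.
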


\begin{proof}
If $n=0$ then $G$ is the trivial group and the result holds trivially.

Proceeding by induction, let $T$ be a finite $d$-ary rooted tree of height $n\geq1$, let $G$ be an abelian subgroup of $\Aut(T)$, and assume that the lemma is true for $d$-ary rooted trees of height $n-1$.  

Let $T^*$ be the height 1 rooted subtree of $T$ consisting only of levels 0 and 1 of $T$, and let $G^*$ be the group of automorphisms of $T^*$ obtained by restriction of automorphisms in $G$ to $T^*$.  Let $H$ be the kernel of the surjective restriction map $G\to G^*$, hence $H$ is the subgroup of $G$ consisting of all automorphisms fixing every level 1 vertex of $T$, and  
\begin{equation}\label{FITTreeAutAbelian}
|G|=|G^*||H|.
\end{equation}

We will show that 
\begin{equation}\label{R1BoundAbelian}
|G^*|\leq\begin{cases}
1 & \text{ if the root of $T/G$ is complete} \\
q & \text{ if the root of $T/G$ is not complete}.
\end{cases}
\end{equation}
If $G^*$ is the trivial group then (\ref{R1BoundAbelian}) is obvious.  If $G^*$ is not the trivial group then the level 1 vertices of $T$ contain a nontrivial $G$-orbit and therefore $T/G$ has $<d$ level 1 vertices.  This means that the root of $T/G$ is not complete.  Since $G^*$ is isomorphic to an abelian subgroup of $S_d$ we have $|G^*|\leq q$ and (\ref{R1BoundAbelian}) follows.

Next let $w_1,w_2,\dots,w_s$ be the a complete set of representatives for the distinct $G$-orbits of level 1 vertices of $T$.  For each $1\leq i\leq s$ let $\Lambda_i$ be the subtree of $T$ whose vertices are $w_i$ and all descendants of $w_i$, and whose edges are all edges of $T$ among these vertices. Thus each $\Lambda_i$ is itself a finite $d$-ary rooted tree of height $n-1$ with root $w_i$.  By construction, the $G$-orbit of any nonroot vertex $v\in T$ meets exactly one of the subtrees $\Lambda_1,\Lambda_2,\dots,\Lambda_s$, since the level 1 ancestor of $v$ contains one and only one of the $w_i$ in its $G$-orbit.

Let 
\begin{equation*}
\begin{split}
G_i & =\{\sigma\in G\mid \sigma(w_i)=w_i\} \\
K_i & =\{\sigma\in G\mid \sigma(v)=v\text{ for all }v\in \Lambda_i\} \\
H_i & =G_i/K_i.
\end{split}
\end{equation*}
Thus $G_i$ is the stabilizer of the vertex $w_i$, $K_i$ is the stabilizer of the entire descendent subtree $\Lambda_i$ of $w_i$.  Since $K_i$ is the kernel of the action of $G_i$ on $\Lambda_i$, the quotient $H_i$ acts faithfully on $\Lambda_i$.  

Consider the map 
\begin{equation}\label{InjectivePartitionMap}
H\hookrightarrow H_1\times H_2\times\dots\times H_s
\end{equation}
defined by inclusion $H\hookrightarrow G_i$ followed by the quotient map $G_i\to H_i$ at each factor.  To see that this map is injective, suppose that $\sigma\in H$ satisfies $\sigma\in K_i$ for each $i$; that is, $\sigma$ restricts to the identity automorphism of each subtree $\Lambda_i$.  If $v\in T$ is a level $m$ vertex for $1\leq m\leq n$, then as we have observed above, $v$ lies in the $G$-orbit of some vertex $v_i$ in one of the trees $\Lambda_1,\Lambda_2,\dots,\Lambda_s$; say $\tau(v_i)=v$ for some $\tau\in G$.  Using that $G$ is abelian and that $\sigma$ fixes $v_i$, we have 
\[
\sigma(v)=\sigma(\tau(v_i))=\tau(\sigma(v_i))=\tau(v_i)=v.
\]
Since $v$ was arbitrary, $\sigma$ is the identity and this establishes the injectivity of (\ref{InjectivePartitionMap}).

Next we observe that there exists a graph isomorphism
\begin{equation}\label{GraphIsomorphism}
\lambda:(\Lambda_1/H_1)\amalg (\Lambda_2/H_2)\amalg \dots \amalg (\Lambda_s/H_s)  \stackrel{\sim}{\rightarrow} \{\text{levels $1$ through $n$ of $T/G$}\}
\end{equation}
where each side is viewed as a disjoint union of $s$ rooted trees.  Here we define $\lambda$ to take the $H_i$-orbit of a point $v\in \Lambda_i$ to the $G$-orbit of $v$ in $T$.  This definition is unambiguous: the groups $G_i$ and $H_i=G_i/K_i$ have the same orbits in $\Lambda_i$ as elements of $K_i$ act trivially on $T_i$.  Therefore two points of $\Lambda_i$ lying in the same $H_i$-orbit also share a $G_i$-orbit, and hence they share the same $G$-orbit as well.  

To see that $\lambda$ is injective, consider $v\in \Lambda_i$ and $v'\in \Lambda_j$ with the same image under $\lambda$, thus $v$ and $v'$ are elements of the same $G$-orbit; that is $\sigma(v)=v'$ for some $\sigma\in G$.  Since $\sigma$ is a tree automorphism, this implies that the level 1 ancestors of $v$ and $v'$, which are $w_i$ and $w_j$, also satisfy $\sigma(w_i)=w_j$.  By construction of the representatives $w_1,w_2,\dots,w_s$ this can only happen when $i=j$.  Hence $\Lambda_i=\Lambda_j$.  Since $\sigma(w_i)=w_i$ we have $\sigma\in G_i$ and hence $v$ and $v'$ are $G_i$-conjugate, completing the proof that $\lambda$ is injective.  

To see that $\lambda$ is surjective, given a nonroot vertex $v\in T$, let $w$ be the level 1 ancestor of $v$.  Then $\sigma(w)=w_i$ for some $\sigma\in G$, and therefore $\sigma(v)\in \Lambda_i$.  Thus $\lambda$ takes the $G_i$-orbit of $\sigma(v)\in \Lambda_i$ to the $G$-orbit of $\sigma(v)$, which coincides with the $G$-orbit of $v$, completing the proof that $\lambda$ is surjective.

Recall that for each $i$, $K_i$ is the kernel of the action of $G_i$ on $\Lambda_i$ and $H_i=G_i/K_i$.  Therefore $\Lambda_i/G_i\simeq \Lambda_i/H_i$.  In particular the rooted trees $\Lambda_i/G_i$ and $\Lambda_i/H_i$ have the same number of incomplete vertices.  Since $H_i$ acts faithfully on $\Lambda_i$, it is isomorphic to an abelian subgroup of $\Aut(\Lambda_i)$, so by the induction hypothesis and the fact that $\Lambda_i$ has height $n-1$ we have
\begin{equation}\label{TiIncompleteBound}
|H_i| \leq q^{I(\Lambda_i/H_i)}.
\end{equation}

Define $\delta_{T/G}=0$ if the root of $T/G$ is complete, and $\delta_{T/G}=1$ if the root of $T/G$ is incomplete.  By the graph isomorphism (\ref{GraphIsomorphism}), the number of incomplete vertices of $T/G$ is the sum of the numbers of incomplete vertices of the $\Lambda_i/H_i$, plus one extra incomplete vertex if the root of $T/G$ is incomplete.  Using this fact as well as (\ref{FITTreeAutAbelian}), (\ref{R1BoundAbelian}), the injectivity of (\ref{InjectivePartitionMap}), and (\ref{TiIncompleteBound}) we have
\begin{equation*}
\begin{split}
\log_q|G| & =\log_q|G^*|+\log_q|H| \\
	& \leq \delta_{T/G}+\sum_{i=1}^{s}\log_q|H_i| \\
	& \leq \delta_{T/G}+\sum_{i=1}^{s}I(\Lambda_i/H_i) \\
	& = I(T/G),
\end{split}
\end{equation*}
completing the proof.
\end{proof}

\begin{proof}[Proof of Theorem \ref{AbelianSubgroupsAreSmallIntro}]
Let $T$ be an infinite $d$-ary rooted tree and let $G$ be an abelian subgroup of $\Aut(T)$.  We must show that $\dim(G)=0$.

For any infinite rooted $d$-ary tree $T$ and any $n\geq0$, let $I_n(T)$ denote the number of incomplete vertices of $T$ of level $n$.  Ab\'ert-Vir\'ag (\cite{MR2114819} Lemma 8.2) have shown that the set of incomplete vertices has density zero in the sense that
\begin{equation}\label{AbertViragLimit}
\lim_{n\to+\infty}\frac{I_n(T)}{d^n}=0.
\end{equation}

For each $n\geq0$ define $I^*_n(T)$ to be the number of incomplete vertices of the height $n$ subtree $T_n$ of $T$.  Equivalently, we have that 
\[
I^*_n(T)=I_0(T)+I_1(T)+I_2(T)+\dots+I_{n-1}(T)
\]
is the total number of incomplete vertices of $T$ among all levels $k$ with $0\leq k\leq n-1$.  The density result (\ref{AbertViragLimit}) implies that
\begin{equation}\label{AbertViragLimitStrengthened}
\lim_{n\to+\infty}\frac{I^*_n(T)}{d^n}=0.
\end{equation}
To see this, let $A_k=I_k(T)/d^k$ and let $A=\max_{k\geq0}A_k$, which is finite by (\ref{AbertViragLimit}).  Then for $n> m\geq0$ we have
\begin{equation*}
\begin{split}
\frac{I^*_n(T)}{d^n} & =\frac{A_{n-1}}{d}+\frac{A_{n-2}}{d^2}+\dots+\frac{A_{n-m}}{d^m} + \sum_{k=m+1}^{n}\frac{A_{n-k}}{d^k} \\
& \leq \frac{A_{n-1}}{d}+\frac{A_{n-2}}{d^2}+\dots+\frac{A_{n-m}}{d^m} + \sum_{k=m+1}^{\infty}\frac{A}{d^k}
\end{split}
\end{equation*}
Holding $m$ fixed and letting $n\to+\infty$, (\ref{AbertViragLimit}) implies that
\begin{equation*}
\begin{split}
\limsup_{n\to+\infty}\frac{I^*_n(T)}{d^n} & \leq \sum_{k=m+1}^{\infty}\frac{A}{d^k},
\end{split}
\end{equation*}
and as $m\geq0$ is arbitrary we conclude that $\limsup_{n\to+\infty}(I^*_n(T)/d^n)=0$, proving (\ref{AbertViragLimitStrengthened}).

For each $n\geq0$ recall that $r_n:\Aut(T)\to\Aut(T_n)$ denotes the surjective restriction map, where $T_n$ is the height $n$ subtree of $T$.  Let $G_n=r_n(G)$ be the image of $G$ under $r_n$; thus $G_n$ is an abelian subgroup of $\Aut(T_n)$.   Moreover, $T/G$ is a rooted tree whose height $n$ subtree $(T/G)_n$ is isomorphic to $T_n/G_n$.  In particular, $I_n^*(T/G)=I(T_n/G_n)$ as the incomplete vertices of $T/G$ among levels 0 through $n-1$ are in bijective correspondence with the set of all incomplete vertices of the height $n$ tree $T_n/G_n$.

Using Lemma \ref{MainAbelianSubgroupBound} and (\ref{AbertViragLimitStrengthened}) we have
\begin{equation*}
\begin{split}
\limsup_{n\rightarrow+\infty}\frac{1}{d^n}\log|G_n| & \leq \limsup_{n\rightarrow+\infty}\frac{1}{d^n}(\log q)I(T_n/G_n) \\
	& = \limsup_{n\rightarrow+\infty}\frac{1}{d^n}(\log q)I_n^*(T/G) \\
	& = 0.
\end{split}
\end{equation*}
We conclude from Lemma \ref{MinDimAbstractCalcProp} that $\udim(G)=0$.
\end{proof}


\section{The dimension of arboreal Galois groups}\label{AGRSection}

In this section we prove preliminary results about the Minkowski dimension of arboreal Galois groups.  Recall that an {\em arboreal pair} is a rational map $f:\PP^1\to\PP^1$ of degree $d\geq2$ defined over a field $K$ of characteristic zero or characteristic $>d$, together with a base point $\alpha\in\PP^1(K)$. 

\begin{prop}\label{MainArbHausdorffDimCalc}
Let $(f,\alpha)$ be an arboreal pair of degree $d$ defined over a field $K$.  The lower and upper Minkowski dimensions of $G_{f,\alpha}$ satisfy the identities 
\begin{equation*}
\begin{split}
\ldim(G_{f,\alpha})  & =\frac{d-1}{\log(d!)}\liminf_{n\to+\infty}\frac{1}{d^n}\log[K(f^{-n}(\alpha)):K] \\
\udim(G_{f,\alpha}) & =\frac{d-1}{\log(d!)}\limsup_{n\to+\infty}\frac{1}{d^n}\log[K(f^{-n}(\alpha)):K]\\
\dim(G_{f,\alpha}) & =\frac{d-1}{\log(d!)}\lim_{n\to+\infty}\frac{1}{d^n}\log[K(f^{-n}(\alpha)):K] \hskip5mm \text{(when this limit exists)}.
\end{split}
\end{equation*}
\end{prop}

\begin{proof}
Abbreviate $\rho=\rho_{f,\alpha}$, $G=G_{f,\alpha}$ and $T=T_{f,\alpha}$.  For each $n\geq0$, recall that $T_n$ is the finite subtree of $T$ including only the vertices in $L_0\cup L_1\cup L_2\cup \dots\cup L_n$ and the edges between these vertices.  Set $K_n=K(f^{-n}(\alpha))$ and $K_\infty=K(f^{-\infty}(\alpha))$.  We first show that 
\begin{equation}\label{HDDegreeIdentity}
[K_n:K]=|r_n(G)|
\end{equation}
where $r_n:\Aut(T)\to\Aut(T_n)$ denotes the surjective restriction map.  Both maps in the sequence
\begin{equation*}
\begin{CD}
\Gal(K_\infty/K)  @> \rho >> G @> r_n >> r_n(G)
\end{CD}
\end{equation*} 
are surjective, and the kernel of $r_n\circ\rho$ is the set of $K$-automorphisms $\sigma:K_\infty\to K_\infty$ fixing each point of $f^{-n}(\alpha)$.  Therefore $\ker(r_n\circ\rho)=\Gal(K_\infty/K_n)$.  It follows that
\[
\Gal(K_n/K)\simeq\Gal(K_\infty/K)/\Gal(K_\infty/K_n)\simeq r_n(G)
\]
from which (\ref{HDDegreeIdentity}) follows.  The desired identities for Minkowski dimension follow from (\ref{HDDegreeIdentity}) and Proposition \ref{MinDimAbstractCalcProp}.
\end{proof}

For the convenience of the reader we recall the Galois-theoretic diamond isomorphism theorem (\cite{lang:algebra} Thm. VI.1.12).

\begin{lem}\label{DITLemma}
Let $E/F$ be a field extension and let $F\subseteq F_1\subseteq E$ and $F\subseteq F_2\subseteq E$ be two intermediate extensions with $F_1/F$ Galois.  Then
\[
\Gal(F_1F_2/F_2)\simeq\Gal(F_1/(F_1\cap F_2))
\]
and in particular $[F_1F_2:F_2]=[F_1:(F_1\cap F_2)]\leq[F_1:F]$.
\end{lem}

The following proposition shows that the Minkowski dimensions of an arboreal Galois representation are invariant under finite extension of the base field.

\begin{prop}\label{BaseExtensionInvarianceProp}
Let $(f,\alpha)$ be an arboreal pair of degree $d$ defined over a field $K$.  Let $K\subseteq K'\subseteq\Kbar$ with $K'/K$ a finite extension.  Let $\rho_{f,\alpha}:\Gal(K(f^{-\infty}(\alpha))/K)\to\Aut(T_{f,\alpha})$ and $\rho'_{f,\alpha}:\Gal(K'(f^{-\infty}(\alpha))/K')\to\Aut(T_{f,\alpha})$ be the arboreal Galois representations defined over the base fields $K$ and $K'$, with images $G_{f,\alpha}$ and $G'_{f,\alpha}$ respectively.  Then $\ldim(G'_{f,\alpha})=\ldim(G_{f,\alpha})$ and $\udim(G'_{f,\alpha})=\udim(G_{f,\alpha})$.
\end{prop}

\begin{proof}
The representation $\rho'_{f,\alpha}$ factors through $\rho_{f,\alpha}$ via the natural restriction embedding $\Gal(K'(f^{-\infty}(\alpha))/K')\hookrightarrow\Gal(K(f^{-\infty}(\alpha))/K)$.  It follows that $G'_{f,\alpha}$ is a subgroup of $G_{f,\alpha}$ of index $\leq [K':K]$.  The equalities of Minkowski dimension then follow from Proposition \ref{MinDimPropertiesProp}.


\end{proof}

Let $f:\PP^1\to \PP^1$ and $g:\PP^1\to \PP^1$ be rational maps defined over $K$ and let $L/K$ be a field extension.  We say that $f$ and $g$ are {\em $L$-conjugate} if there exists an automorphism $\varphi:\PP^1\to \PP^1$ defined over $L$ such that $\varphi\circ g\circ\varphi^{-1}=f$.  If such $\varphi:\PP^1\to \PP^1$ exists and $(f,\alpha)$ and $(g,\beta)$ are arboreal pairs satisfying $\varphi(\beta)=\alpha$, we say the arboreal pairs $(f,\alpha)$ and $(g,\beta)$ are {\em $L$-conjugate}.

\begin{prop}\label{ConjugacyInvarianceProp}
If the arboreal pairs $(f,\alpha)$ and $(g,\beta)$ are $\Kbar$-conjugate then $\ldim(G_{f,\alpha})=\ldim(G_{g,\beta})$ and $\udim(G_{f,\alpha})=\udim(G_{g,\beta})$.
\end{prop}

\begin{proof}
By Proposition \ref{BaseExtensionInvarianceProp} we may extend $K$ if necessary and assume without loss of generality that $\varphi$ is defined over $K$.  Since $\varphi\circ g\circ\varphi^{-1}=f$, we have $\varphi(g^{-n}(\beta))=f^{-n}(\alpha)$ for all $n\geq0$, which implies that $K(f^{-n}(\alpha))=K(g^{-n}(\beta))$ as $\varphi$ is defined over $K$.  The result then follows from Proposition \ref{MainArbHausdorffDimCalc}.
\end{proof}


\section{Dimension inequalities}\label{RationalMapsSect}

\begin{prop}\label{PostCritBasePointProp}
Let $(f,\alpha)$ be an arboreal pair of degree $d$ defined over a field $K$.  If $\alpha\in \PP^1(K)$ is an $f$-postcritical point and $m\geq1$ denotes the smallest positive integer such that $f^{-m}(\alpha)$ contains a critical point of $f$, then $\udim(G_{f,\alpha})\leq1-d^{-m}$.    
\end{prop}

\begin{proof}
The definition of $m$ implies that $|f^{-m}(\alpha)|\leq d^m-1$, and (\ref{NonCompleteUDimBound}) of the proof of Proposition \ref{NonCompleteProp} shows that $\udim(G_{f,\alpha})\leq |f^{-m}(\alpha)|/d^m=1-d^{-m}$.
\end{proof}

\begin{prop}\label{PeriodicBasePointProp}
Let $(f,\alpha)$ be an arboreal pair of degree $d$ defined over a field $K$.  If $\alpha\in \PP^1(K)$ is an $f$-periodic point and $m\geq1$ denotes the minimal $f$-period of $\alpha$, then $\udim(G_{f,\alpha})\leq1-d^{-m}$.    
\end{prop}

\begin{proof}
Let $C$ be the periodic $f$-cycle of size $m$ containing $\alpha$; thus $C\subseteq\PP^1(K)$.  Also $C\subseteq f^{-\infty}(\alpha)$ and the full backward orbit $f^{-\infty}(\alpha)$ contains no periodic points other than those in $C$, as every point in $f^{-\infty}(\alpha)$ contains $\alpha$ in its forward orbit.  If $C=f^{-\infty}(\alpha)$ then $\alpha$ is exceptional for $C$ and therefore $\dim(G_{f,\alpha})=0$ trivially, so we may assume that $f^{-\infty}(\alpha)$ contains some strictly preperiodic points.  

We may assume without loss of generality that $|f^{-m}(\alpha)|=d^m$, since if $|f^{-m}(\alpha)|<d^m$ then we already know from Proposition \ref{PostCritBasePointProp} that $\udim(G_{f,\alpha})\leq 1-d^{-m}$. Enumerate 
\[
f^{-m}(\alpha)=\{\beta_1,\beta_2,\dots,\beta_{d^m}\}.
\]
Since $\alpha$ is $m$-periodic we have $\alpha\in f^{-m}(\alpha)$, and therefore we may index the $\beta_i$ so that $\beta_{d^m}=\alpha$.  By Proposition \ref{BaseExtensionInvarianceProp} we may extend the base field if needed and assume that all of the $\beta_1,\beta_2,\dots,\beta_{d^m}$ are $K$-rational, and hence every point indexing levels $0$ through $m$ of the preimage tree $T_{f,\alpha}$ is $K$-rational.  

Fix $n\geq m$, and set $k=n-m$.  Thus we have a decomposition
\begin{equation}\label{PreimageBranches}
f^{-n}(\alpha)=f^{-k}(\beta_1)\cup f^{-k}(\beta_2)\cup \dots \cup f^{-k}(\beta_{d^m})
\end{equation}
as each $\beta\in f^{-n}(\alpha)$ satisfies $f^k(\beta)=\beta_i$ for some $1\leq i\leq d^m$.  We will show that 
\begin{equation}\label{FewerGenerators}
K(f^{-n}(\alpha)) = K(f^{-k}(\beta_1)\cup f^{-k}(\beta_2)\cup \dots \cup f^{-k}(\beta_{d^m-1})).
\end{equation}
The significance of (\ref{FewerGenerators}) is that this field identity is still true, even though the generating set on the right-hand-side omits the final branch $f^{-k}(\beta_{d^m})=f^{-k}(\alpha)$ of the preimage tree, which one might expect a priori to be necessary in view of the decomposition (\ref{PreimageBranches}).  This can be viewed as a consequence of the periodicity occurring in the tree $T_{f,\alpha}$, which is caused by the root $\alpha$ appearing in the 0-th level and again in the $m$-th level as $\beta_{d^m}=\alpha$.

To prove (\ref{FewerGenerators}) it suffices to prove that 
\begin{equation}\label{FewerGenerators2}
f^{-k}(\beta_{d^m})\subseteq \PP^1(L), \text{ for } L=K(f^{-k}(\beta_1)\cup f^{-k}(\beta_2)\cup \dots \cup f^{-k}(\beta_{d^m-1})),
\end{equation}
after which (\ref{FewerGenerators}) follows from this and (\ref{PreimageBranches}), as it shows that the points of $f^{-k}(\beta_{d^m})$ are already defined over the field generated by the other branches $f^{-k}(\beta_i)$ for $i=1,2,\dots,d^m-1$.  Let $\beta\in f^{-k}(\beta_{d^m})=f^{-k}(\alpha)$, thus $f^k(\beta)=\alpha$.  If $\beta$ is $f$-periodic, then $\beta\in C$ and hence $\beta$ is already $K$-rational and hence in $L$.  So we may assume that $\beta$ is not $f$-periodic. 

\medskip

\noindent
{\bf Claim:} For some $1\leq i\leq d^m-1$, $\beta$ is an element of the forward $f$-orbit of some $\beta'$ in $f^{-k}(\beta_i)$.

\medskip

If we can show the Claim, then it follows at once that $\beta\in \PP^1(L)$, as $f$ is defined over $K$, which completes the proof of (\ref{FewerGenerators2}) and hence (\ref{FewerGenerators}).

To prove the Claim, define the {\em preperiod} of $\beta$ to be the smallest integer $\ell\geq1$ for which $f^\ell(\beta)\in C$; this preperiod is positive as we have an assumption that $\beta$ itself is not periodic.  Also $\ell\leq k$ since $f^k(\beta)=\alpha\in C$.  Let $\gamma=f^{\ell-1}(\beta)$ be the final non-periodic point in the forward orbit of $\beta$.  Then 
\[
f(\gamma)\in C\subseteq\{\alpha\}\cup f^{-1}(\alpha)\cup \dots\cup f^{-(m-1)}(\alpha),
\]
which implies that
\[
\gamma\in f^{-1}(\alpha)\cup f^{-2}(\alpha)\cup \dots\cup f^{-m}(\alpha).
\]

Since $\gamma$ occurs in one of the sets 
\[
f^{-1}(\alpha),f^{-2}(\alpha),\dots,f^{-m}(\alpha)=\{\beta_1,\beta_2,\dots,\beta_{d^m}\},
\]
it follows that $\gamma$ is an element of the forward $f$-orbit $\{f^j(\beta_i)\mid j\geq0\}$ of one of the $\beta_i$.  But $\gamma$ is not periodic, so $\gamma$ is not in the forward $f$-orbit of the periodic point $\beta_d^m=\alpha$.  So $\gamma$ is an element of the forward $f$-orbit of $\beta_i$ for some $i=1,2,\dots, d^m-1$.  Since $\gamma=f^{\ell-1}(\beta)$ and $\ell\leq k$, it follows that $\beta$ itself is in the forward $f$-orbit of some $\beta'$ in $f^{-k}(\beta_i)$, completing the proof of the Claim and hence of (\ref{FewerGenerators}).  

Finally, with (\ref{FewerGenerators}) proved, we can complete the proof of the theorem.  For each $1\leq i\leq d^m-1$ we have 
\begin{equation}\label{LevelkTreeBound}
[K(f^{-k}(\beta_i)):K]=|\Gal(K(f^{-k}(\beta_i))/K)|\leq d!^{(d^k-1)/(d-1)}.
\end{equation}
Here we have used the fact that $\beta_i$ is $K$-rational and hence $\Gal(K(f^{-k}(\beta_i))/K)$ is isomorphic to a subgroup of $\Aut(T_k)$, where $T=T_{f,\beta_i}$, and hence (\ref{AutGroupCountBound}) implies the stated inequality in (\ref{LevelkTreeBound}).  

Using (\ref{FewerGenerators}) and the well-known submultiplicative property of the degrees of field extensions under compositum we have 
\begin{equation*}
\begin{split}
\frac{1}{d^n}\log[K(f^{-n}(\alpha)):K] & \leq \frac{1}{d^n}\log \prod_{i=1}^{d^m-1}[K(f^{-k}(\beta_i)):K] \\
	& \leq \frac{d^m-1}{d^n}\log(d!^{(d^k-1)/(d-1)}).
\end{split}
\end{equation*}
Recalling that $k=n-m$, we obtain
\begin{equation*}
\limsup_{n\to+\infty}\frac{1}{d^n}\log[K(f^{-n}(\alpha)):K] \leq (1-d^{-m})\frac{\log d!}{(d-1)}
\end{equation*}
and hence $\udim(G_{f,\alpha})\leq 1-d^{-m}$ using Proposition \ref{MainArbHausdorffDimCalc}.
\end{proof}

The following proposition gives a bound on the Minkowski dimension of an arboreal Galois representation associated to an iterate of a rational map.  

\begin{prop}\label{IterateDimProp}
Let $(f,\alpha)$ be an arboreal pair of degree $d$ defined over a field $K$.  
\begin{itemize}
\item[{\bf (a)}]  For each $m\geq2$ it holds that 
\[
\udim(G_{f^m,\alpha}) \leq (C_{d}/C_{d^m})\udim(G_{f,\alpha}),
\]
where $C_{D}=\frac{\log (D!)}{D-1}$.  In particular, $\udim(G_{f^m,\alpha}) \leq C_{d}/C_{d^m} <1$.
\item[{\bf (b)}] If $\dim(G_{f,\alpha})$ exists, then $\dim(G_{f^m,\alpha}) = (C_{d}/C_{d^m})\dim(G_{f,\alpha})$.
\end{itemize}
\end{prop}

\begin{proof}
Let $m\geq2$.  The rational map $g=f^m$ has degree $D=d^m$, so using Proposition \ref{MainArbHausdorffDimCalc} we have
\begin{equation*}
\begin{split}
C_{D} \, \udim(G_{g,\alpha}) &  =  \limsup_{n\to+\infty}\frac{\log[K(g^{-n}(\alpha)):K]}{D^n} \\
	&  = \limsup_{n\to+\infty}\frac{\log[K(f^{-mn}(\alpha)):K]}{d^{mn}} \\
	&  \leq \limsup_{n\to+\infty}\frac{\log[K(f^{-n}(\alpha)):K]}{d^{n}} \\
	&  = C_d \, \udim(G_{f,\alpha}),
\end{split}
\end{equation*}
which proves the first statement of part {\bf (a)}.  We observe that $C_D=\frac{\log(D!)}{D-1}$ is increasing for $D\geq2$ and therefore $C_{d}/C_{d^m}<1$ whenever $m\geq2$.  If $\dim(G_{f,\alpha})$ exists, then each $\limsup$ in the preceding calculation is a limit and equality holds in the inequality, from which part {\bf (b)} follows.
\end{proof}

A consequence of Proposition \ref{IterateDimProp} is that if $\dim(G_{f,\alpha})=0$, then $\dim(G_{f^m,\alpha})=0$ for all $m\geq1$.  The following lemma is a sort of converse to this statement, as it allows one to derive $\dim(G_{f,\alpha})=0$ provided that $\dim(G_{f^m,\beta})=0$ for a suitable finite collection of base points $\beta$ in the backward $f$-orbit of $\alpha$.

\begin{lem}\label{IterateDimPropAlternate}
Let $(f,\alpha)$ be an arboreal pair of degree $d$ defined over a field $K$.  Let $m\geq2$ and define
\[
\Ucal_m=\{\alpha\}\cup f^{-1}(\alpha) \cup f^{-2}(\alpha)\cup\dots\cup f^{-(m-1)}(\alpha).
\]
If $\Ucal_m\subseteq\PP^1(K)$ and $\dim(G_{f^m,\beta})=0$ for all $\beta\in \Ucal_m$, then $\dim(G_{f,\alpha})=0$.
\end{lem}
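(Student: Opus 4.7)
The plan is to reduce the $d$-ary behavior of $f$ to the $d^m$-ary behavior of $g := f^m$, exploiting the identity $f^{-n}(\alpha) = (f^m)^{-q}(f^{-r}(\alpha))$ whenever $n = qm + r$ with $0 \le r < m$. Set $K_n = K(f^{-n}(\alpha))$. Writing $n = qm + r$ in this way, each element of $f^{-n}(\alpha)$ lies in $g^{-q}(\beta)$ for some $\beta \in f^{-r}(\alpha) \subseteq \mathcal{U}_m$, so
\begin{equation*}
K_n \;=\; \prod_{\beta \in f^{-r}(\alpha)} K(g^{-q}(\beta)),
\end{equation*}
where the product denotes the compositum of fields. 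Because each $\beta \in f^{-r}(\alpha)$ lies in $\mathcal{U}_m \subseteq \PP^1(K)$ by hypothesis, the arboreal Galois representation $G_{g,\beta}$ is defined, and the hypothesis $\dim(G_{g,\beta}) = 0$ together with Proposition \ref{MainArbHausdorffDimCalc} gives
\begin{equation*}
\lim_{q \to +\infty} \frac{\log[K(g^{-q}(\beta)):K]}{(d^m)^q} \;=\; 0.
\end{equation*}

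Applying the compositum degree bound of Lemma \ref{CompositumLemma}(a) to the above display, and using $d^n = d^r (d^m)^q$, we obtain
\begin{equation*}
\frac{\log[K_n:K]}{d^n} \;\le\; \frac{1}{d^r} \sum_{\beta \in f^{-r}(\alpha)} \frac{\log[K(g^{-q}(\beta)):K]}{(d^m)^q}.
\end{equation*}
Since $|f^{-r}(\alpha)| \le d^r \le d^{m-1}$ is bounded independently of $q$, and since each summand tends to $0$ as $q \to +\infty$ by the preceding limit, the right-hand side tends to $0$ as $n \to +\infty$ along each of the finitely many residue classes $r \in \{0,1,\dots,m-1\}$ modulo $m$. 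Taking the limsup over all $n$ therefore yields
\begin{equation*}
\limsup_{n\to+\infty} \frac{\log[K_n:K]}{d^n} \;=\; 0,
\end{equation*}
and Proposition \ref{MainArbHausdorffDimCalc} then delivers $\udim(G_{f,\alpha}) = 0$, hence $\dim(G_{f,\alpha}) = 0$ exists and equals zero.

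There is no serious obstacle here; the argument is a straightforward bookkeeping exercise once one observes the decomposition $f^{-n}(\alpha) = \bigcup_{\beta \in f^{-r}(\alpha)} g^{-q}(\beta)$ and bounds the resulting compositum. The only minor subtlety is ensuring that the residue $r$ of $n$ modulo $m$ does not spoil the asymptotics, which is handled by the trivial bound $d^r \le d^{m-1}$ and by splitting the limsup into $m$ arithmetic progressions in $n$, on each of which $q \to +\infty$.
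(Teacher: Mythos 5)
Your proof is correct and follows essentially the same route as the paper: decompose $f^{-n}(\alpha)$ with $n=qm+r$ into the sets $g^{-q}(\beta)$ for $\beta\in f^{-r}(\alpha)$, bound the compositum degree via Lemma \ref{CompositumLemma}(a), and control the limsup by splitting $n$ into the $m$ residue classes modulo $m$. No gaps.
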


\begin{proof}
Define $a_n=\frac{1}{d^n}\log[K(f^{-n}(\alpha)):K]$.  Partitioning the sequence $\{a_n\}_{n=0}^{\infty}$ into $m$ subsequences $\{a_{mk+\ell}\}_{k=0}^{\infty}$ over $\ell=0,1,2,\dots,m-1$, the pigeonhole principle implies that the largest limit point of $\{a_n\}_{n=0}^{\infty}$ must be equal to the largest real number occurring as a limit point of one of these $m$ subsequences.  Therefore from Proposition \ref{MainArbHausdorffDimCalc} we have
\begin{equation}\label{PartitionMthPreimage}
\begin{split}
\frac{\log (d!)}{d-1} \, \udim(G_{f,\alpha}) &  =  \limsup_{n\to+\infty}a_n =\max_{0\leq \ell\leq m-1}\left(\limsup_{k\to+\infty}a_{mk+\ell}\right).
\end{split}
\end{equation}

Fix $\ell$ in the range $0\leq \ell\leq m-1$, and enumerate 
\[
f^{-\ell}(\alpha)=\{\beta_1,\beta_2,\dots,\beta_{d^\ell}\}\subseteq\PP^1(K),
\]
repeating according to multiplicity.  Set $g=f^m$, thus $\deg(g)=d^m$.  We have
\begin{equation*}
f^{-(mk+\ell)}(\alpha)=g^{-k}(\beta_1)\cup g^{-k}(\beta_2)\cup \dots\cup g^{-k}(\beta_{d^\ell})
\end{equation*}
and therefore 
\[
K(f^{-(mk+\ell)}(\alpha))=\prod_{i=1}^{d^\ell} K(g^{-k}(\beta_i)).
\]
Using the well-known submultiplicative property of the degrees of field extensions under compositum we have 
\begin{equation}\label{ellthPartitionMthPreimage}
\begin{split}
a_{mk+\ell} & = \frac{1}{d^{mk+\ell}}\log[K(f^{-(mk+\ell)}(\alpha)):K] \\
	& \leq \frac{1}{d^{mk+\ell}}\log\prod_{i=1}^{d^\ell}  [K(g^{-k}(\beta_i)):K] \\
	& = \frac{1}{d^\ell}\sum_{i=1}^{d^\ell}  \frac{1}{(d^m)^k}\log[K(g^{-k}(\beta_i)):K] \\
	& \to 0 \text{ as } k\to+\infty,
\end{split}
\end{equation}
by the hypothesis that $\udim(G_{f^m,\beta_i})=0$ for all $\beta_i\in f^{-\ell}(\alpha)\subseteq\Ucal_m$. We conclude from (\ref{ellthPartitionMthPreimage}) and (\ref{PartitionMthPreimage}) that $\udim(G_{f,\alpha})=0$, completing the proof.
\end{proof}


\section{The dimension of profinite iterated monodromy groups}\label{IMGSect}

In this section we prove Theorems \ref{IMGDimExistsEqualThmIntro} and \ref{IMGDichotomyThmIntro}. Throughout this section, $f:\PP^1\to\PP^1$ is a rational map of degree $d\geq2$ defined over a field $K$ of characteristic zero or $>d$, and $t$ is an indeterminate. 

The following Lemma reflects the self-similarity of the group $G_{f}^\arith$ and contains the main idea behind the proofs of Theorems \ref{IMGDimExistsEqualThmIntro} and \ref{IMGDichotomyThmIntro}.

\begin{lem}\label{IMGLimsupLemma}
For each $n\geq0$ define 
\[
a_n=\frac{1}{d^n}\log[K(f^{-n}(t)):K(t)].
\]
Then for each $m\geq1$ it holds that 
\begin{equation}\label{IMGLimsupLemmaBound}
\limsup_{n\to+\infty} (a_n) \leq \frac{a_m}{1-\frac{1}{d^m}}.  
\end{equation}
\end{lem}

\begin{proof}
Fix $m\geq1$ and to ease notation set
\[
M:=[K(f^{-m}(t)):K(t)].
\]
Thus $a_m=\frac{1}{d^m}\log M$.  We claim that if $L/K$ is any field extension and $\beta\in \PP^1(L)$, then 
\begin{equation}\label{AnyExtensionBoundNew}
[L(f^{-m}(\beta)):L]\leq M.
\end{equation}
To prove this, we first apply Lemma \ref{DITLemma} with $F = K(t)$, $F_1  = K(f^{-m}(t))$, and $F_2 = L(t)$.  Then $F_1F_2=L(f^{-m}(t))$ and the bound $[F_1F_2:F_2]\leq[F_1:F]$ of Lemma \ref{DITLemma} implies that \begin{equation}\label{LBoundGenericNew}
[L(f^{-m}(t)):L(t)]\leq[K(f^{-m}(t)):K(t)]=M.
\end{equation}
Since the degree of an extension of function fields is non-increasing under specialization, we have 
\begin{equation}\label{SpecializationNonIncreasingNew}
[L(f^{-m}(\beta)):L] \leq [L(f^{-m}(t)):L(t)],
\end{equation}
and together (\ref{LBoundGenericNew}) and (\ref{SpecializationNonIncreasingNew}) complete the proof of (\ref{AnyExtensionBoundNew}).

Next, we use (\ref{AnyExtensionBoundNew}) for all $\beta$ in the backward $f$-orbit of $t$.  For each $n\geq0$ set $K_n=K(f^{-n}(t))$.  Enumerate $f^{-n}(t)=\{\beta_1,\beta_2,\dots,\beta_{d^n}\}$.  Thus
\[
K_{n+m}=K_n(f^{-m}(\beta_1))K_n(f^{-m}(\beta_2))\dots K_n(f^{-m}(\beta_{d^n})).
\]
Using (\ref{AnyExtensionBoundNew}) with $L=K_n$ and the well-known submultiplicative property of the degrees of field extensions under compositum we have 
\begin{equation}\label{KnIterativeBoundNew}
\begin{split}
[K_{n+m}:K_n] & \leq \prod_{i=1}^{d^n}[K_{n}(f^{-m}(\beta_i)):K_n] \leq M^{d^n}.
\end{split}
\end{equation}
Using (\ref{KnIterativeBoundNew}) and the fact that $a_m=\frac{1}{d^m}\log M$ we obtain
\begin{equation}\label{anIterativeBoundNew}
\begin{split}
a_{n+m} & =\frac{1}{d^{n+m}}\log[K_{n+m}:K] \\
	& =\frac{1}{d^{n+m}}\log[K_{n+m}:K_n]+\frac{1}{d^{n+m}}\log[K_{n}:K] \\
	& \leq \frac{1}{d^m}\log M+\frac{a_n}{d^m} \\
	& = a_m+\frac{a_n}{d^m}
\end{split}
\end{equation}
Taking a limit supremum in (\ref{anIterativeBoundNew}) with respect to $n$ (with $m$ still fixed) we obtain
\begin{equation*}
\begin{split}
\limsup_{n\to+\infty}(a_{n})  & = \limsup_{n\to+\infty}(a_{n+m}) \\
	& \leq a_m+\frac{1}{d^m}\limsup_{n\to+\infty}(a_n),
\end{split}
\end{equation*}
which is equivalent to (\ref{IMGLimsupLemmaBound}) and completes the proof.
\end{proof}

\begin{proof}[Proof of Theorem \ref{IMGDimExistsEqualThmIntro}]
For each intermediate field $K\subseteq L\subseteq \Ksep$ and each $n\geq0$, define 
\[
a_n(L)=\frac{1}{d^n}\log[L(f^{-n}(t)):L(t)].
\]
For each fixed $m\geq1$ we apply Lemma \ref{IMGLimsupLemma} (with the field $L$ in place of $K$) to obtain 
\begin{equation}
\limsup_{n\to+\infty} a_n(L) \leq \frac{a_m(L)}{1-\frac{1}{d^m}}.  
\end{equation}
Since the left hand side does not depend on $m$, we may take a limit infimum with respect to $m$ on the right hand side and obtain  
\[
\limsup_{n\to+\infty}a_n(L)\leq \liminf_{m\to+\infty}\frac{a_m(L)}{1-\frac{1}{d^m}}=\liminf_{m\to+\infty}a_m(L)
\]
from which it follows that $\lim_{n\to+\infty}a_n(L)$ exists.  The two extremal cases $L=\Ksep$ and $L=K$ imply by Proposition \ref{MainArbHausdorffDimCalc} that
\begin{equation*}
\begin{split}
\dim(G_{f}^\geom) & = \frac{d-1}{\log(d!)}\lim_{n\to+\infty}a_n(\Ksep) \\
\dim(G_{f}^\arith) & = \frac{d-1}{\log(d!)}\lim_{n\to+\infty}a_n(K)
\end{split}
\end{equation*}
both exist.  

It remains to prove that $\dim(G_f^\geom)=\dim(G_f^\arith)$.  We know from the inclusion (\ref{GeomArithInclusion}) that $\dim(G_f^\geom)\leq\dim(G_f^\arith)$, so it suffices to show that $\dim(G_f^\arith)\leq\dim(G_f^\geom)$, which is the same as showing that
\begin{equation}\label{TheGoal}
\lim_{n\to+\infty}a_n(K)\leq \lim_{n\to+\infty}a_n(\Ksep).
\end{equation}

We observe that if $L/K$ is any finite extension then 
\begin{equation}\label{DoesNotDependOnL}
\lim_{n\to+\infty}a_n(K) = \lim_{n\to+\infty}a_n(L),
\end{equation}
because $L(t)/K(t)$ is a finite extension, and Minkowski dimension is invariant under finite extension of the base field by Proposition \ref{BaseExtensionInvarianceProp}.

Fix $m\geq1$.  Note that $K(f^{-m}(t))/K$ is a separable (hence Galois) extension by our standing assumption on the characteristic of $K$.  Let $L=\Ksep\cap K(f^{-m}(t))$ be the field of constants in $K(f^{-m}(t))$.  Thus $L/K$ is a finite extension and
\begin{equation}\label{ApplicationOfDIT}
[\Ksep(f^{-m}(t)):\Ksep(t)]=[L(f^{-m}(t)):L(t)],
\end{equation}
which can be seen via the diamond isomorphism theorem, stated as Lemma \ref{DITLemma}, with $F=K(t)$, $F_1=K(f^{-m}(t))=L(f^{-m}(t))$, and $F_2=\Ksep(t)$.  

Next, note that (\ref{ApplicationOfDIT}) is equivalent to the identity $a_m(L)=a_m(\Ksep)$.  Applying Lemma \ref{IMGLimsupLemma} (with the field $L$ in place of $K$) we have
\begin{equation}\label{UsingIMGLimsupLemmaBound}
\lim_{n\to+\infty} a_n(L) \leq \frac{a_m(L)}{1-\frac{1}{d^m}} = \frac{a_m(\Ksep)}{1-\frac{1}{d^m}}.
\end{equation}
Combining (\ref{UsingIMGLimsupLemmaBound}) with (\ref{DoesNotDependOnL}) we have
\begin{equation}\label{PreLimitInm}
\lim_{n\to+\infty} a_n(K) \leq \frac{a_m(\Ksep)}{1-\frac{1}{d^m}}.
\end{equation}
As $m\geq1$ is arbitrary and the left-hand-side of (\ref{PreLimitInm}) does not depend on $m$, we may take a limit on the right-hand-side to obtain the desired inequality (\ref{TheGoal}) and completing the proof.
\end{proof}

With the existence of $\dim(G_{f}^\arith)$ established, we prove the bound (\ref{MDIMGUpperBound}) stated in the introduction.  If $\alpha$ is not postcritical, then this can be seen easily from the inclusion (\ref{SpecializationIntro}).  But generally, for each $n\geq0$, as the degree of an extension of function fields is non-increasing under specialization, it holds that
\begin{equation*}
[K(f^{-n}(\alpha)):K] \leq [K(f^{-n}(t)):K(t)],
\end{equation*}
and $\udim(G_{f,\alpha})\leq\dim(G_{f}^\arith)$ then follows from this and Proposition \ref{MainArbHausdorffDimCalc}.



\begin{prop}\label{NonLargeArithIMGProp}
If $G_{f}^\arith\neq\Aut(T_{f,t})$, then $\dim(G_{f}^\arith)<1$.
\end{prop}

\begin{proof}
Let $T=T_{f,t}$, a complete infinite $d$-ary rooted tree, and for each $n\geq0$, let $T_n$ denote the finite $d$-ary rooted tree of height $n$ obtained by selecting only the vertices of $T$ of levels $k=0,1,2,\dots,n$, and the edges between them.  The hypothesis that $G_{f}^\arith\neq\Aut(T)$ means that there exists $m\geq1$ such that the image of 
\[
\rho_{f}:\Gal(K(f^{-\infty}(t))/K(t))\to\Aut(T)
\]
restricts to a proper subgroup of the full automorphism group of the finite tree $T_m$.  Thus 
\[
[K(f^{-m}(t)):K(t)]=|\Gal(K(f^{-m}(t))/K(t))|<|\Aut(T_m)|=d!^{(d^m-1)/(d-1)}.
\]
By Lemma \ref{IMGLimsupLemma} we have
\begin{equation*}
\begin{split}
\lim_{n\to+\infty} \frac{1}{d^n}\log[K(f^{-n}(t)):K(t)] & \leq \frac{1}{(1-\frac{1}{d^m})}\frac{1}{d^m}\log[K(f^{-m}(t)):K(t)] \\
	& < \frac{1}{(1-\frac{1}{d^m})}\frac{1}{d^m}\log(d!^{(d^m-1)/(d-1)}) \\
	& = \frac{\log d!}{d-1},
\end{split}
\end{equation*}
and $\dim(G_f^\arith)<1$ follows from Proposition \ref{MainArbHausdorffDimCalc}.
\end{proof}

\begin{proof}[Proof of Theorem \ref{IMGDichotomyThmIntro}]
We must show that one and only one of these two conditions holds:
\begin{itemize}
\item[{\bf (i)}]   $G_{f}^\geom=G_{f}^\arith=\Aut(T_{f,t})$; or
\item[{\bf (ii)}]   $\dim(G_{f}^\geom)=\dim(G_{f}^\arith)<1$.
\end{itemize}

Conditions {\bf (i)} and {\bf (ii)} are mutually exclusive since $\dim(T_{f,t})=1$.  Assuming that {\bf (ii)} is false, we will prove that {\bf (i)} holds.  Because $\dim(G_{f}^\geom)=\dim(G_{f}^\arith)$, the assumption that {\bf (ii)} is false means that $\udim(G_{f}^\arith)=1$.  From Proposition \ref{NonLargeArithIMGProp} we deduce that $G_{f}^\arith=\Aut(T_{f,t})$.  That $G_{f}^\geom=\Aut(T_{f,t})$ follows as well, since $G_f^\geom$ is the special case of $G_f^\arith$ viewing $\Ksep$ as the base field.
\end{proof}



\section{Dimension maximality for quadratic rational maps}\label{MaxConjSect}

We have seen in Proposition \ref{PostCritBasePointProp} that if $(f,\alpha)$ is an arboreal pair, then dimension maximality $\dim(G_{f,\alpha})=1$ cannot occur in the case of postcritical base point $\alpha$.  So for the remainder of this section we assume that $\alpha$ is not $f$-postcritical.  In this case, $\dim(\Aut(T_{f,\alpha}))=1$ and 
\begin{equation}\label{FIImpliesMaximal}
[\Aut(T_{f,\alpha}):G_{f,\alpha}]<+\infty \hskip1cm\Rightarrow\hskip1cm \dim(G_{f,\alpha})=1
\end{equation} 
by Proposition \ref{MinDimPropertiesProp}.  We do not know if the converse of (\ref{FIImpliesMaximal}) is true in general, but in this section we show that if Jones' finite index conjecture (\cite{MR3220023} Conjecture 3.11) is true, then the converse of (\ref{FIImpliesMaximal}) holds for quadratic rational maps over number fields.

Let $(f,\alpha)$ be an arboreal pair of degree $d=2$ defined over $K$, such that $\alpha$ is not $f$-postcritical, and consider the following possibilities:
\begin{itemize}
\item[{\bf (i)}] The base point $\alpha$ is $f$-periodic.
\item[{\bf (ii)}] The rational map $f$ is postcritically finite.
\item[{\bf (iii)}] A critical orbit collision of the form $f^{r+1}(\gamma_1)=f^{r+1}(\gamma_2)$ holds for some $r\geq1$, where $\gamma_1,\gamma_2\in\PP^1(\Kbar)$ are the critical points of $f$.
\item[{\bf (iv)}] There exists a nontrivial $\Kbar$-automorphism $\varphi:\PP^1\to\PP^1$ such that $f\circ\varphi=\varphi\circ f$ and $\varphi(\alpha)=\alpha$.
\end{itemize}
Jones' finite index conjecture (\cite{MR3220023} Conjecture 3.11) states (still assuming $d=2$) that if $K$ is a number field and if the pair $(f,\alpha)$ satisfies none of the four conditions listed above, then $G_{f,\alpha}$ must have finite index as a subgroup of $\Aut(T_{f,\alpha})$.

For any field $K$ (but still assuming that $\ch(K)$ is zero or $>2)$, it was observed by Jones (\cite{MR3220023} $\S$3) that $[\Aut(T_{f,\alpha}):G_{f,\alpha}]=+\infty$ under any of the four conditions {\bf (i)}, {\bf (ii)}, {\bf (iii)}, {\bf (iv)}.  But in each of these cases, infinite index can actually be strengthened to dimension non-maximality $\udim(G_{f,\alpha})<1$, as follows.  Conditions {\bf (i)} and {\bf (ii)} imply that $\udim(G_{f,\alpha})<1$ by Proposition \ref{PeriodicBasePointProp} and Corollary \ref{PCFCorIntro}, respectively.  Under condition {\bf (iii)}, Pink has shown (\cite{pink2} Thm. 4.8.1(b)) that $G_{f}^\geom\neq\Aut(T_{f,t})$ and hence $\udim(G_{f,\alpha})\leq\udim(G_{f}^\arith)<1$ by (\ref{MDIMGUpperBound}) and Theorem \ref{IMGDichotomyThmIntro}.  And under condition {\bf (iv)}, as explained in \cite{MR3220023} $\S$3 we may extend the base field $K$ and conjugate $f$ with a $K$-automorphism of $\PP^1$, so without loss of generality we can reduce to the case $f(x)=\frac{r(x^2+1)}{x}$ for some nonzero $r\in K$, and $\alpha=0$.  Because $f\circ\psi=\psi\circ f$ for $\psi(x)=-x$, it follows that $G_{f,0}\subseteq C(\psi)\subseteq\Aut(T_{f,0})$ where $C(\psi)$ is the centralizer of $\psi$ viewed as an automorphism of $T_{f,0}$.  It follows that $\udim(G_{f,0})\leq \udim (C(\varphi))=\frac{1}{2}$ (see \cite{MR3177912} Cor. 4.2).

\begin{thm}
Assume that Jones' finite index conjecture (\cite{MR3220023} Conjecture 3.11) is true.  Let $(f,\alpha)$ be an arboreal pair of degree $d=2$ defined over a number field $K$ and assume that $\alpha$ is not $f$-postcritical.  Then 
\begin{equation}\label{FIIIFMaximal}
[\Aut(T_{f,\alpha}):G_{f,\alpha}]<+\infty \hskip1cm\text{ if and only if } \hskip1cm\dim(G_{f,\alpha})=1.
\end{equation} 
\end{thm}

\begin{proof}
The forward implication of (\ref{FIIIFMaximal}) follows from Proposition \ref{MinDimPropertiesProp}.  For the converse, assume that $\dim(G_{f,\alpha})=1$.  Then the pair $(f,\alpha)$ does not satisfy any of the conditions {\bf (i)}, {\bf (ii)}, {\bf (iii)}, {\bf (iv)}, as we have seen that each of these force dimension non-maximality $\udim(G_{f,\alpha})<1$.  By Jones' conjecture, we conclude that $[\Aut(T_{f,\alpha}):G_{f,\alpha}]<+\infty$.
\end{proof}


\section{Dimension minimality and a conjecture on quadratic polynomials}\label{MinConjSect}

We begin this section by proving that power, Chevyshev, and Latt\`es maps satisfy dimension minimality $\dim(G_{f,\alpha})=0$.  

A rational map $f:\PP^1\to\PP^1$ of degree $d\geq2$ defined over a field $K$ is a {\em power map} if $f$ is $\Kbar$-conjugate to $x^{\pm d}$.  

\begin{prop}\label{PowerMapsAreSmall}
Let $(f,\alpha)$ be an arboreal pair of degree $d\geq2$ over a field $K$.  If $f$ is a power map, then $\dim(G_{f,\alpha})=0$.
\end{prop}

\begin{proof}
We are given that $f$ is $\Kbar$-conjugate to $x^{\pm d}$.  By Proposition \ref{BaseExtensionInvarianceProp} we may extend the base field if needed and assume that $f$ is $K$-conjugate to $x^{\pm d}$.  Then invoking Proposition \ref{ConjugacyInvarianceProp}, we may simply assume without loss of generality that $f(x)=x^{\pm d}$.

First consider the case $f(x)=x^{d}$.  Fix $n\geq0$ and let $K_n=K(f^{-n}(\alpha))$.  Fix $\beta\in f^{-n}(\alpha)\subseteq K_n$, thus $\beta^{d^n}=\alpha$ and more generally $f^{-n}(\alpha)$ is the set of roots in $\Kbar$ of $x^{d^n}-\alpha$.  By a well-known calculation in Kummer theory, $K_n$ contains a primitive $d^n$-th root of unity $\zeta$ and
\[
f^{-n}(\alpha)=\{\beta,\zeta\beta,\zeta^2\beta,\dots,\zeta^{d^n-1}\beta\},
\]
so $K_n=K(\zeta,\beta)$.  In particular,
\[
[K_n:K]\leq [K(\zeta):K][K(\beta):K]\leq \varphi(d^n)d^n\leq d^{2n}.
\]
The desired limit $\udim(G_{f,\alpha})=0$ now follows from the bound $[K_n:K]\leq d^{2n}$ and Proposition \ref{MainArbHausdorffDimCalc}.

When $f(x)=x^{-d}$, we have $f^2(x)=x^{d^2}$.  Extending the base field $K$ if necessary, the hypotheses of Lemma \ref{IterateDimPropAlternate} are satisfied with $m=2$ because we have already proved the vanishing of $\udim(G_{f^2,\beta})$ for all $\beta\in\PP^1(K)$.  Therefore $\udim(G_{f,\alpha})=0$ by Lemma \ref{IterateDimPropAlternate}, completing the proof.
\end{proof}

A rational map $f:\PP^1\to\PP^1$ of degree $d\geq2$ defined over a field $K$ is a {\em Chebyshev map} if $f$ is $\Kbar$-conjugate to $\pm T_{d}(x)$, where $T_d(x)$ denotes the $d$-th Chebyshev polynomial.  Recall (\cite{MR2316407} $\S$6.2) that the Chebyshev polynomial $T_d(x)\in\ZZ[x]$ is characterized by the identity $T_d(x+x^{-1}) = x^d+x^{-d}$, which can be interpreted as a semi-conjugacy
\begin{equation}\label{ChebCharacterization2}
\begin{CD}
\PP^1 @> x^d >> \PP^1 \\ 
@V \pi VV                                    @VV \pi V \\ 
\PP^1  @> T_d >>  \PP^1
\end{CD}
\hskip2cm  T_d\circ\pi=\pi\circ x^d 
\end{equation}
from the power map $g(x)=x^d$ to $T_d(x)$, where $\pi:\PP^1\to\PP^1$ is the rational map $\pi(x)=x+x^{-1}$.

\begin{prop}\label{ChebyshevMapsAreSmall}
Let $(f,\alpha)$ be an arboreal pair of degree $d\geq2$ over a field $K$.  If $f$ is a Chebyshev map, then $\dim(G_{f,\alpha})=0$.
\end{prop}

\begin{proof}
As in the case of power maps, applications of Propositions \ref{BaseExtensionInvarianceProp} and \ref{ConjugacyInvarianceProp} allow us to reduce to the case that $f(x)=\pm T_{d}(x)$.  

\medskip
\noindent
\underline{Case 1: $f(x)=T_{d}(x)$.}  Extending the base field if necessary using Proposition \ref{BaseExtensionInvarianceProp}, we may assume that $\pi^{-1}(\alpha)=\{\beta_1,\beta_2\}\subseteq \PP^1(K)$.   We first show that
\begin{equation}\label{SemiConjFieldInclusion}
K(f^{-n}(\alpha))\subseteq K(g^{-n}(\beta_1)\cup g^{-n}(\beta_2))
\end{equation}
for each $n\geq0$.  Given $\gamma\in f^{-n}(\alpha)$, select $\delta\in\pi^{-1}(\gamma)$.  The semi-conjugacy (\ref{ChebCharacterization2}) implies that $f^n\circ\pi=\pi\circ g^n$, and therefore
\[
\alpha=f^n(\gamma) = f^n(\pi(\delta)) = \pi(g^n(\delta)).
\]
It follows that $g^{n}(\delta)=\beta_i$ for some $i=1,2$; that is $\delta\in g^{-n}(\beta_i)$.  We have shown that $\gamma$ is the $\pi$-image of an element of $g^{-n}(\beta_1)\cup g^{-n}(\beta_2)$, and as $\pi$ is defined over $K$, we obtain
\[
\gamma\in K(g^{-n}(\beta_1)\cup g^{-n}(\beta_2))
\]
from which (\ref{SemiConjFieldInclusion}) follows.

Using the submultiplicative property of the degrees of field extensions under compositum we have 
\begin{equation}\label{SemiConjnthDegreeBound}
\begin{split}
[K(f^{-n}(\alpha)):K] &  \leq  [K(g^{-n}(\beta_1))K(g^{-n}(\beta_2)):K] \\
	&  \leq  [K(g^{-n}(\beta_1)):K][K(g^{-n}(\beta_2)):K].
\end{split}
\end{equation}
and therefore 
\begin{equation}\label{SemiConjLimsupBoundCheb}
\limsup_{n\to+\infty}\frac{1}{d^n}\log[K(f^{-n}(\alpha)):K] \leq \sum_{i=1}^{2}\limsup_{n\to+\infty}\frac{1}{d^n}\log[K(g^{-n}(\beta_i)):K].
\end{equation}
The right-hand-side of (\ref{SemiConjLimsupBoundCheb}) is zero by Proposition \ref{MainArbHausdorffDimCalc}, because we have already proved $\dim(G_{g,\beta_i})=0$ for the power map $g(x)=x^d$.  Therefore the left-hand-side of (\ref{SemiConjLimsupBoundCheb}) vanishes, which implies that $\dim(G_{f,\alpha})=0$ by Proposition \ref{MainArbHausdorffDimCalc}.

\medskip
\noindent
\underline{Case 2: $f(x)=-T_{d}(x)$ and $d$ is even.}  Since $d$ is even, $T_d(x)$ is an even polynomial (\cite{MR2316407} Prop. 6.6 (c)), and hence $f(x)=-T_d(x)=-T_d(-x)$; in other words $f(x)$ is conjugate to $T_d(x)$ by the automorphism $\varphi(x)=-x$.  We conclude that $\udim(G_{f,\alpha})=0$ using Proposition \ref{ConjugacyInvarianceProp} together with the fact that we have already proved the corresponding result for $T_d(x)$. 

\medskip
\noindent
\underline{Case 3: $f(x)=-T_{d}(x)$ and $d$ is odd.}  Since $d$ is odd, $T_d(x)$ is an odd polynomial (\cite{MR2316407} Prop. 6.6 (c)).  Using this fact, together with the property $T_{de}(x)=T_d(T_e(x))$ satisfied by Chebyshev polynomials (\cite{MR2316407} Prop. 6.6 (b)), we have
\[
f^2(x)=-T_d(-T_d(x))=(-1)(-1)T_d(T_d(x))=T_{d^2}(x).
\]
Extending the base field $K$ if necessary, the hypotheses of Lemma \ref{IterateDimPropAlternate} are satisfied with $m=2$ because we have already proved the vanishing of $\udim(G_{f^2,\beta})=\udim(G_{T_{d^2},\beta})$.  Therefore $\udim(G_{f,\alpha})=0$ by Lemma \ref{IterateDimPropAlternate}, completing the proof.
\end{proof}

A rational map $f:\PP^1\to\PP^1$ of degree $d\geq2$ defined over a field $K$ is a {\em Latt\`es map} if there exists a complete nonsingular curve $X/\Kbar$ of genus 1, a morphism $g:X\to X$ of degree $d$ defined over $\Kbar$, and a finite morphism $\varphi:X\to\PP^1$ defined over $\Kbar$ such that $f\circ\varphi=\varphi\circ g$.

\begin{equation*}
\begin{CD}
X @> g >> X \\ 
@V \varphi VV                                    @VV \varphi V \\ 
\PP^1  @> f >>  \PP^1
\end{CD}
\hskip2cm  f\circ\varphi=\varphi\circ g
\end{equation*}

\begin{prop}\label{LattesMapsAreSmall}
Let $(f,\alpha)$ be an arboreal pair of degree $d\geq2$ over a field $K$.  If $f$ is a Latt\`es map, then $\dim(G_{f,\alpha})=0$.
\end{prop}

\begin{proof}
Extending the base field via Proposition \ref{BaseExtensionInvarianceProp}, we can assume that $X,g,\varphi$ are defined over $K$ and that $\varphi^{-1}(\alpha)\subseteq X(K)$.  Enumerate $\varphi^{-1}(\alpha)=\{\beta_1,\beta_2,\dots,\beta_r\}$.  Then we have
\begin{equation*}
\limsup_{n\to+\infty}\frac{1}{d^n}\log[K(f^{-n}(\alpha)):K] \leq \sum_{i=1}^{r}\limsup_{n\to+\infty}\frac{1}{d^n}\log[K(g^{-n}(\beta_i)):K]
\end{equation*}
by precisely the same argument, mutatis mutandis, as the one establishing (\ref{SemiConjLimsupBoundCheb}) in the Chebyshev case.  So by Proposition \ref{MainArbHausdorffDimCalc}, in order to show that $\dim(G_{f,\alpha})=0$, it suffices to show, for each $\beta\in X(K)$, that
\begin{equation}\label{GenusOneLimitCalc}
\lim_{n\to+\infty}\frac{\log[K(g^{-n}(\beta)):K]}{d^n}=0.
\end{equation}

Since $X$ is a genus 1 curve we may view it as an elliptic curve with origin $O=\beta$.  Let $T=g(O)$ and define
\[
h:X\to X \hskip1cm  h(P):=g(P)-T.
\]
Since $h(O)=O$, it follows that $h:X\to X$ is an endomorphism of $X$ of degree $d$ defined over $K$. In particular, it holds that
\begin{equation}\label{IsogenyisHomom}
h(P+Q)=h(P)+h(Q) 
\end{equation}
for all $P,Q\in X(\Kbar)$ (\cite{silverman:aec} Thm. III.4.8).  For each $P\in X(\Kbar)$ we have $g(P)=h(P)+T$, and using (\ref{IsogenyisHomom}) we can recursively calculate more generally that
\begin{equation}\label{GenusOneRecursion}
g^n(P) = h^n(P) + h^{n-1}(T)+h^{n-2}(T)+\dots +T.
\end{equation}

Fix $n\geq0$ and select a point $P_0\in g^{-n}(O)$.  Then for arbitrary $P\in g^{-n}(O)$ we have $g^n(P)=O=g^n(P_0)$, and hence using (\ref{GenusOneRecursion}) we obtain $h^n(P)=h^n(P_0)$.  Since $h$ is an endomorphism, this implies that 
\begin{equation}\label{gDivPoint}
h^n(P-P_0)=O.
\end{equation}

Next denote by $\hat{h}:X\to X$ the dual endomorphism of $h$, characterized by $\hat{h}\circ h=h\circ\hat{h}=[d]$.  (For each $m\in\ZZ$, recall the standard notation $[m]:X\to X$ for the multiplication by $m$ endomorphism.)  Since $h$ and $\hat{h}$ commute with each other we have $\hat{h}^n\circ h^n=[d]^n=[d^n]$ for all $n\geq1$.  Applying $\hat{h}^n$ to both sides of (\ref{gDivPoint}) we obtain
\begin{equation*}
[d^n](P-P_0)=\hat{h}^n\circ h^n(P-P_0)=\hat{h}^n(O)=O.
\end{equation*}
We have shown that every $P\in g^{-n}(O)$ satisfies $P=Z+P_0$ for some $d^n$-torsion point $Z\in X(\Kbar)$.  It follows that
\begin{equation}\label{GenusOneFieldInclusion}
K(g^{-n}(O)) \subseteq K(X[d^n],P_0)
\end{equation}
where $X[d^n]$ is the $d^n$-torsion subgroup of $X(\Kbar)$.  

We have degree bounds 
\begin{equation}\label{GenusOneDegreeBound1}
\begin{split}
[K(P_0):K] & \leq d^n \\
[K(X[d^n]):K] & \leq d^{4n}.
\end{split}
\end{equation}
The first bound in (\ref{GenusOneDegreeBound1}) follows from the fact that $f^n$ is a degree $d^n$ morphism and $f^n(P_0)\in X(K)$.  The second bound in (\ref{GenusOneDegreeBound1}) is a consequence of the faithful action of $\Gal(K(X[d^n])/K)$ on the torsion group $X[d^n]$.  Since $X[d^n]\simeq (\ZZ/d^n\ZZ)\times(\ZZ/d^n\ZZ)$ (recall that $\ch(K)$ is either 0 or $>d$) we obtain an injective group homomorphism
\[
\Gal(K(X[d^n])/K)\hookrightarrow \Aut(X[d^n])\simeq\GL_2(\ZZ/d^n\ZZ)
\]
and the second bound in (\ref{GenusOneDegreeBound1}) follows from the trivial bound $|\GL_2(\ZZ/d^n\ZZ)|\leq d^{4n}$.

We conclude using (\ref{GenusOneFieldInclusion}) and (\ref{GenusOneDegreeBound1}) that
\begin{equation}\label{GenusOneDegreeBound2}
\begin{split}
[K(g^{-n}(O)):K] & \leq [K(X[d^n],P_0):K] \\
	& \leq [K(X[d^n]):K][K(P_0):K] \\
	& \leq d^{5n}.
\end{split}
\end{equation}
From (\ref{GenusOneDegreeBound2}) we obtain the limit (\ref{GenusOneLimitCalc}), completing the proof.
\end{proof}

\begin{proof}[Proof of Theorem \ref{NewConjImpliesAP}]
Assume that Conjecture \ref{MinkowskiDimSmallConj} is true.  We must show that the $d=2$ case of Conjecture \ref{AndrewsPetscheConj} is true.  Thus consider a quadratic polynomial map $f:\PP^1\to\PP^1$ defined over a number field $K$, let $\alpha\in \PP^1(K)$ be a nonexceptional point, and assume that $G_{f,\alpha}$ is abelian.  By Theorem \ref{AbelianSubgroupsAreSmallIntro} we have $\dim(G_{f,\alpha})=0$, so by the statement of Conjecture \ref{MinkowskiDimSmallConj} it holds that the pair $(f,\alpha)$ is $\Kbar$-conjugate to a pair $(g,\beta)$, where either $g(x)=x^2$ or $g(x)=T_2(x)$, and $\beta\in\PP^1(\Kbar)$.  Let $\varphi:\PP^1\to\PP^1$ be the $\Kbar$-automorphism such that $\varphi\circ g\circ\varphi^{-1}=f$ and $\varphi(\beta)=\alpha$.

In the case $g(x)=x^2$, it was proved in \cite{MR4150256} Thm. 12 that, because $G_{f,\alpha}$ is abelian, it must hold that $\beta$ is a root of unity and that the automorphism $\varphi$ must be defined over $K^\ab$, completing the proof in this case.  In the case $g(x)=T_d(x)$, it was proved in \cite{MR4150256} Thm. 13 that, because $G_{f,\alpha}$ is abelian, it must hold that $\beta=\zeta+\frac{1}{\zeta}$ for a root of unity $\zeta$, and that the automorphism $\varphi$ must be defined over $K^\ab$, completing the proof in this case. 
\end{proof}

\def\cprime{$'$}



\end{document}